\theoremstyle{plain}
\newtheorem {lemma}{Lemma}
\newtheorem {theorem}[lemma]{Theorem}
\theoremstyle{definition}
\newtheorem{definition}[lemma]{Definition}
\newtheorem {example}[lemma]{Example}
\newcommand{\N}{\mathbb{N}}
\newcommand{\Z}{\mathbb{Z}}
\newcommand{\X}{\langle X\rangle}
\newcommand{\A}{\mathcal{A}}
\newcommand{\R}{\mathcal{R}}
\newcommand{\lv}{\operatorname{lv}}
\newcommand{\irr}{\operatorname{irr}}
\title[Bases for Kumjian-Pask algebras]{Bases for Kumjian-Pask algebras over standard $k$-graphs}
\author{Raimund Preusser}
\begin{document}
\maketitle
\section{Notation}
We write $\mathbb{N}$ for the set of all natural numbers, including $0$, and $\mathbb{\Z}$ for the set of all integers. For any $k\in \N\setminus\{0\}$ and $m, n\in \Z^k$, $m\leq n$ means $m_i\leq n_i$ for any $1\leq i\leq k$, $m\lor n$ denotes the pointwise maximum and $m\land n$ the pointwise minimum. Further we set $|m|:=m_1+\dots+m_k$. We denote the usual basis of $\Z^k$ by $\{e_i\}$.\par
In a small category $\mathcal{C}$ with object set $\mathcal{C}^0$, we identify objects $v\in \mathcal{C}^0$ with their identity morphisms, and write $\mathcal{C}$ for the set of morphisms. We write $s$ and $r$ for the domain and codomain maps from $\mathcal{C}$ to $\mathcal{C}^0$.
\section{$k$-graphs and Kumjian-Pask algebras}
\begin{definition}[{\sc $k$-graph}]For a positive integer $k$, we view the additive monoid $\mathbb{N}^k$ as a category with one object. A {\it $k$-graph} is a countable category $\Lambda = (\Lambda^0,\Lambda, r, s)$ together with a functor $d : \Lambda\rightarrow \mathbb{N}^k$, called the {\it degree map}, satisfying the following {\it factorization property}:
if $\lambda\in \Lambda$ and $d(\lambda) = m+n$ for some $m, n\in \mathbb{N}^k$, then there are unique $\mu, \nu \in\Lambda$ such
that $d(\mu) = m$, $d(\nu) = n$, and $\lambda= \mu\circ \nu$.
\end{definition}

For $n\in \mathbb{N}^k$, we write $\Lambda^n := d^{-1}(n)$ (note that it follows from the factorization property that the two definitions of $\Lambda^0$ coincide), and call the elements $\lambda$ of $\Lambda^n$ {\it paths of degree $n$ from $s(\lambda)$ to $r(\lambda)$}. For $v\in \Lambda^0$ we write $v\Lambda^n$ for the set of paths of degree $n$ with range $v$. The $k$-graph $\Lambda$ is called {\it row-finite} if $v\Lambda^n$ is finite for every $v\in \Lambda^0$ and $n\in \mathbb{N}^k$; $\Lambda$ {\it has
no sources} if $v\Lambda^n$ is nonempty for every $v\in \Lambda^0$ and $n\in \mathbb{N}^k$. Further we set $\Lambda^{\neq 0}:= \{\lambda\in\Lambda\mid d(\lambda)\neq 0\}$, and for each $\lambda\in\Lambda^{\neq 0}$ we introduce a {\it ghost path} $\lambda^*$ (it is assumed that the map $\lambda\mapsto \lambda^*$ is injective and further that $(\Lambda^{\neq 0})^*\cap\Lambda=\emptyset$ where $(\Lambda^{\neq 0})^*$ denotes the set of all ghost paths). We set $v^*:=v$ for any $v\in \Lambda^0$.
\begin{definition}[{\sc Kumjian-Pask algebra}]\label{2}
Let $\Lambda$ be a row-finite $k$-graph without sources and let $R$ be a commutative ring with $1$. The (associative, not necessarily unital) $R$-algebra presented by the generating set $\Lambda^0\cup\Lambda^{\neq 0}\cup (\Lambda^{\neq 0})^*$ and the
relations
\begin{enumerate}[(KP1)]
\item $\{v\in\Lambda^0\}$ is a family of mutually orthogonal idempotents,
\item for all $\lambda,\mu\in\Lambda^{\neq 0}$ with $r(\mu) = s(\lambda)$, we have
\[\lambda\mu = \lambda\circ\mu,~\mu^*\lambda^*=(\lambda\circ\mu)^*, ~r(\lambda)\lambda = \lambda = \lambda s(\lambda), ~s(\lambda)\lambda^* = \lambda^* = \lambda^*r(\lambda),\]
\item for all $\lambda,\mu\in\Lambda^{\neq 0}$ with $d(\lambda)=d(\mu)$, we have
\[\lambda^*\mu=\delta_{\lambda,\mu}s(\lambda),\]
\item for all $v\in\Lambda^0$ and all $n\in\mathbb{N}^k\setminus\{0\}$, we have
\[v=\sum\limits_{\lambda\in v\Lambda^n}\lambda\lambda^*\]
\end{enumerate}
is called the {\it Kumjian-Pask algebra} defined by $\Lambda$ and is denoted by $KP_R(\Lambda)$.
\end{definition}
\noindent
{\bf Problem:} Find a basis for $KP_R(\Lambda)$.\\

It follows from the lemma below (which is easy to prove, see \cite[Proof of Lemma 3.3]{pchr}), that the elements $\lambda\mu^*~(\lambda,\mu\in\Lambda)$ span $KP_R(\Lambda)$.
\begin{lemma}\label{3}
Let $\lambda,\mu\in\Lambda$. Then
for each $q\geq d(\lambda)\lor d(\mu)$, we have
\[\lambda^*\mu =\sum\limits_{\substack{\lambda\circ\alpha=\mu\circ \beta,\\d(\lambda\circ\alpha)=q}}\alpha\beta^*.\]
\end{lemma}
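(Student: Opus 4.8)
The plan is to compute $\lambda^*\mu$ directly from the relations (KP1)--(KP4), inserting two copies of the vertex-idempotent decomposition supplied by (KP4) and then collapsing the resulting double sum by means of the orthogonality relation (KP3). First I would dispose of the degenerate case $r(\lambda)\neq r(\mu)$: using $\lambda^*=\lambda^*r(\lambda)$ and $\mu=r(\mu)\mu$ from (KP2) together with the orthogonality of the idempotents $\{v\}$ in (KP1), one gets $\lambda^*\mu=\lambda^*r(\lambda)r(\mu)\mu=0$; and on the right-hand side the equation $\lambda\circ\alpha=\mu\circ\beta$ forces $r(\lambda)=r(\mu)$, so the index set is empty and both sides vanish. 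Hence I may assume $r(\lambda)=r(\mu)$ and set $p:=q-d(\lambda)$ and $p':=q-d(\mu)$, both in $\N^k$ since $q\geq d(\lambda)\lor d(\mu)$.

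For the main computation I would first write $\lambda^*=s(\lambda)\lambda^*$ and expand $s(\lambda)=\sum_{\alpha\in s(\lambda)\Lambda^p}\alpha\alpha^*$ via (KP4), then use the ghost-composition rule $\alpha^*\lambda^*=(\lambda\circ\alpha)^*$ from (KP2) (note $r(\alpha)=s(\lambda)$), obtaining
\[\lambda^*\mu=\sum_{\alpha\in s(\lambda)\Lambda^p}\alpha\,(\lambda\circ\alpha)^*\mu.\]
Applying the same device to $\mu=\mu s(\mu)$, expanding $s(\mu)=\sum_{\beta\in s(\mu)\Lambda^{p'}}\beta\beta^*$ and using $\mu\circ\beta=\mu\beta$, each summand becomes
\[(\lambda\circ\alpha)^*\mu=\sum_{\beta\in s(\mu)\Lambda^{p'}}(\lambda\circ\alpha)^*(\mu\circ\beta)\,\beta^*.\]
Every path $\lambda\circ\alpha$ and $\mu\circ\beta$ occurring here has degree exactly $q$, so (KP3) yields $(\lambda\circ\alpha)^*(\mu\circ\beta)=\delta_{\lambda\circ\alpha,\mu\circ\beta}\,s(\lambda\circ\alpha)$. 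Substituting this and absorbing the surviving vertex idempotent by $\alpha\,s(\alpha)=\alpha$ (again (KP2)), the double sum collapses to
\[\lambda^*\mu=\sum_{\substack{\alpha\in s(\lambda)\Lambda^p,\ \beta\in s(\mu)\Lambda^{p'}\\ \lambda\circ\alpha=\mu\circ\beta}}\alpha\beta^*.\]
The last step is to observe that the constraints $\alpha\in s(\lambda)\Lambda^p$, $\beta\in s(\mu)\Lambda^{p'}$ and $\lambda\circ\alpha=\mu\circ\beta$ together are equivalent to requiring $\lambda\circ\alpha=\mu\circ\beta$ and $d(\lambda\circ\alpha)=q$, which identifies the index set above with the one in the statement.

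I do not expect a conceptual obstacle here, since the argument is a direct manipulation of the defining relations; the only thing requiring care is the bookkeeping of degrees and of the range/source constraints throughout the two expansions. The one genuine subtlety is the treatment of the degenerate cases $q=d(\lambda)$ or $q=d(\mu)$: there the corresponding instance of (KP4) must be read as the trivial identity $s(\lambda)=s(\lambda)$ (respectively $s(\mu)=s(\mu)$) rather than as a nontrivial sum, because (KP4) is only asserted for $n\neq 0$. With this understood the same chain of equalities goes through verbatim, and in particular when $q=d(\lambda)=d(\mu)$ the computation reduces to a single application of (KP3), matching the at-most-one-term right-hand side.
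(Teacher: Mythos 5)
Your argument is correct and is essentially the standard computation that the paper itself delegates to \cite[Proof of Lemma 3.3]{pchr}: expand $s(\lambda)$ and $s(\mu)$ via (KP4) at degrees $q-d(\lambda)$ and $q-d(\mu)$, push the ghost paths through with (KP2), and collapse the double sum with (KP3). Your handling of the degenerate cases ($r(\lambda)\neq r(\mu)$, and $q=d(\lambda)$ or $q=d(\mu)$, where the expansion degenerates to $s(\lambda)=s(\lambda)s(\lambda)^*$ via (KP1) and the convention $v^*=v$) is exactly the bookkeeping one needs.
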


\section{Standard $k$-graphs}
In Definition $\ref{stgr}$ below we use the following conventions. Let $l\in \N\setminus\{0\}$ and $s,t\in \N$. We denote by $\{1,\dots,l\}^s$ the cartesian product of $s$ copies of $\{1,\dots,l\}$ (we use the convention $\{1,\dots,l\}^0:=\{()\}$ where $()=\emptyset$ is the empty tuple). Further, for any $p=(p_s,\dots,p_1)\in \{1,\dots,l\}^s$ and $q=(q_t,\dots,q_1)\in \{1,\dots,l\}^t$, we define $p\times q:=
(p_s,\dots,p_1,q_t,\dots,q_1)\in \{1,\dots,l\}^{s+t}$ (hence $p\times q=q$ if $s=0$ and $p\times q=p$ if $t=0$).
\begin{definition}[{\sc Standard $k$-graph}]\label{stgr}
Let $k,l\in\N\setminus\{0\}$. Set \[\Lambda^0:=\Z^k,~~ \Lambda:=\{(v,w,p)\in\Z^k\times \Z^k \times \{1,\dots,l\}^{|v-w|}\mid v\geq w\}\]
and define $r, s : \Lambda\rightarrow \Lambda^0$ by $r(v,w,p) := v$ and $s(v,w,p) := w$. Define composition by $(u,v,p)\circ (v,w,q) = (u, w,p\times q)$ and define $d : \Lambda\rightarrow \N^k$ by $d(v,w,p) := v-w$. Then the $k$-graph $\Lambda= (\Lambda^0,\Lambda, r, s, d)$ is called the {\it standard $k$-graph of level $l$}. 
\end{definition}

If $k,l\in\N\setminus\{0\}$, $\Lambda$ is the standard $k$-graph of level $l$ and $\lambda=(v,w,p)\in \Lambda$, then $p$ is called the {\it level vector of $\lambda$} and is denoted by $\lv(\lambda)$. In the following we will always use the indexing $\lv(\lambda)=(\lv(\lambda)_{|d(\lambda)|},\dots,\lv(\lambda)_1)$ of the components of a level vector $\lv(\lambda)$.

\begin{example}
The skeleton (cf. \cite[p. 3615]{pchr}) of the standard $2$-graph of level $2$ looks as follows:\\\\
\[
\xymatrix{\ddots&&\vdots&&\iddots\\
&(1,-1)\ar@/^0.6pc/[r]\ar@/_0.6pc/[r]   & (1,0)\ar@/^0.6pc/[r]\ar@/_0.6pc/[r] & (1,1)&\\
\dots&(0,-1)\ar@/^0.6pc/[r]\ar@/_0.6pc/[r]\ar@{-->}@/^0.6pc/[u]\ar@{-->}@/_0.6pc/[u]  & (0,0) \ar@/^0.6pc/[r]\ar@/_0.6pc/[r]\ar@{-->}@/^0.6pc/[u]\ar@{-->}@/_0.6pc/[u]  & (0,1)\ar@{-->}@/^0.6pc/[u]\ar@{-->}@/_0.6pc/[u]  &\dots\\
&(-1,-1)\ar@/^0.6pc/[r]\ar@/_0.6pc/[r]\ar@{-->}@/^0.6pc/[u]\ar@{-->}@/_0.6pc/[u] & (-1,0) \ar@/^0.6pc/[r]\ar@/_0.6pc/[r] \ar@{-->}@/^0.6pc/[u]\ar@{-->}@/_0.6pc/[u] &(-1,1) \ar@{-->}@/^0.6pc/[u]\ar@{-->}@/_0.6pc/[u] &&\\
\iddots&&\vdots&&\ddots
}.
\]
\\
\end{example}
\section{Bases for Kumjian-Pask algebras over standard $k$-graphs}
In this section $R$ denotes a commutative ring with $1$ and $\Lambda$ the standard $k$-graph of level $l$ for some $k,l\in\N\setminus\{0\}$. For any $v\in \Lambda^0(=\Z^k)$ and $n\in \N^k\setminus \{0\}$ set 
\[\lambda^{v,n}:=(v,v-n,(1,\dots,1))\in v\Lambda^n.\]
If $\mu\in\Lambda$, we sometimes write $\mu\circ\lambda^{\sim,n}$ instead of $\mu\circ\lambda^{s(\mu),n}$.
Further set
\[\hat\A:=\{(\lambda,\mu)\mid\lambda,\mu\in \Lambda^{\neq 0}, s(\lambda)=s(\mu)\}\]
and
\begin{align*}
\A:=\{(\lambda,\mu)\in\hat\A\mid\text{there is no }n\in\N^k\setminus\{0\}\text{ and }\lambda',\mu'\in\Lambda\text{ such that }\lambda=\lambda'\circ\lambda^{\sim,n}, \mu=\mu'\circ\lambda^{\sim,n}\}.
\end{align*}
The proof of the following lemma is straightforward.
\begin{lemma}\label{6}
Let $(\lambda,\mu)\in \hat \A$. Then $(\lambda,\mu)\in \A$ iff either $\lv(\lambda)_1\neq 1$ or $\lv(\mu)_1\neq 1$ or $d(\lambda)\land d(\mu)=0$.
\end{lemma}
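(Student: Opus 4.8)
The plan is to reduce the defining condition of $\A$ to an explicit statement about the last entries of the two level vectors, using the explicit description of composition in the standard $k$-graph together with the factorization property. First I would analyze, for a fixed $\lambda=(v,w,p)\in\Lambda^{\neq 0}$ and $n\in\N^k\setminus\{0\}$, exactly when $\lambda$ admits a factorization $\lambda=\lambda'\circ\lambda^{\sim,n}$. Here the source-end factor is $\lambda^{\sim,n}=\lambda^{w+n,n}=(w+n,w,(1,\dots,1))$, which has degree $n$ and level vector consisting of $|n|$ ones; since $d(\lambda')=d(\lambda)-n$ must lie in $\N^k$, such a factorization can exist only if $n\leq d(\lambda)$. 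Because composition concatenates level vectors via the rule $p\times q$, with the source-side tuple occupying the lowest-indexed entries under the convention $\lv(\lambda)=(\lv(\lambda)_{|d(\lambda)|},\dots,\lv(\lambda)_1)$, the factorization property shows that the required factorization exists if and only if $n\leq d(\lambda)$ and $\lv(\lambda)_i=1$ for all $1\leq i\leq |n|$.

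Applying this criterion in both coordinates, I obtain that $(\lambda,\mu)\notin\A$ if and only if there is some $n\in\N^k\setminus\{0\}$ with $n\leq d(\lambda)\land d(\mu)$ such that the last $|n|$ entries of both $\lv(\lambda)$ and $\lv(\mu)$ equal $1$. It then remains to show that this is equivalent to the negation of the asserted right-hand side, namely to the simultaneous validity of $\lv(\lambda)_1=1$, $\lv(\mu)_1=1$ and $d(\lambda)\land d(\mu)\neq 0$, which establishes the lemma by contraposition.

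For the forward direction, any admissible $n$ is nonzero, so $d(\lambda)\land d(\mu)\geq n>0$ gives $d(\lambda)\land d(\mu)\neq 0$, and since $|n|\geq 1$ the lowest entry of each level vector is forced to be $1$, i.e.\ $\lv(\lambda)_1=\lv(\mu)_1=1$. Conversely, if $\lv(\lambda)_1=\lv(\mu)_1=1$ and $d(\lambda)\land d(\mu)\neq 0$, I would simply pick an index $i$ with $(d(\lambda)\land d(\mu))_i\geq 1$ and set $n:=e_i$; then $n\leq d(\lambda)\land d(\mu)$ and $|n|=1$, so the only requirement of the criterion — that the single lowest entry of each level vector be $1$ — holds by hypothesis, witnessing $(\lambda,\mu)\notin\A$.

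The only genuinely delicate point is the factorization criterion of the first step: one must verify that cutting a degree-$n$ piece off the source end of $(v,w,p)$ reads off precisely the entries $\lv(\lambda)_{|n|},\dots,\lv(\lambda)_1$, and that this piece coincides with $\lambda^{\sim,n}$ exactly when those entries are all $1$. Once the bookkeeping of the indexing convention against the concatenation rule $p\times q$ is checked, the remaining equivalence is immediate — in particular the backward direction only ever needs $|n|=1$ — which matches the paper's remark that the proof is straightforward.
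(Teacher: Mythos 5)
Your argument is correct and is exactly the ``straightforward'' verification the paper has in mind: you correctly identify that $\lambda=\lambda'\circ\lambda^{\sim,n}$ holds iff $n\leq d(\lambda)$ and $\lv(\lambda)_1=\dots=\lv(\lambda)_{|n|}=1$, and the reduction to the case $|n|=1$ via $n=e_i$ with $(d(\lambda)\land d(\mu))_i\geq 1$ settles both directions. Nothing is missing.
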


If $(\lambda,\mu),(\lambda',\mu')\in \A$, then we write $(\lambda,\mu)\sim(\lambda',\mu')$ iff there are $m,n\in\N^k\setminus\{0\}$ such that $\lambda\circ\lambda^{\sim,m}=\lambda'\circ\lambda^{\sim,n}$ and $\mu\circ\lambda^{\sim,m}=\mu'\circ\lambda^{\sim,n}$. One checks easily that $\sim$ is an equivalence relation on $\A$. We denote the equivalence class of an element $(\lambda,\mu)\in \A$ by $[(\lambda,\mu)]$. For any equivalence class $[(\lambda,\mu)]$ choose a representative $(\lambda^{[(\lambda,\mu)]},\mu^{[(\lambda,\mu)]})$. We denote by $\R$ the subset of $\A$ consisting of all the chosen representatives. 
\begin{lemma}\label{7}
Let $(\lambda,\mu)\in\A$. Then \[[(\lambda,\mu)]=\{(\lambda',\mu')\in \hat \A\mid r(\lambda')=r(\lambda),r(\mu')=r(\mu), \lv(\lambda')=\lv(\lambda), \lv(\mu')=\lv(\mu)\}.\]
If $d(\lambda)\land d(\mu)=0$, then $[(\lambda,\mu)]=\{(\lambda,\mu)\}$.
\end{lemma}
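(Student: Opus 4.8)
The plan is to translate the relation $\sim$ into explicit conditions on ranges, degrees and level vectors, and then to pin everything down using Lemma \ref{6} together with the elementary identity
\[(a+c)\land(b+c)=(a\land b)+c\qquad(a,b,c\in\Z^k),\]
which holds because taking componentwise minima commutes with translation. Throughout, write $S$ for the set on the right-hand side of the asserted equality, and for $\nu\in\Lambda^{\neq0}$ let $t(\nu)$ denote the number of trailing $1$'s in $\lv(\nu)$ (i.e.\ the largest $j$ with $\lv(\nu)_1=\dots=\lv(\nu)_j=1$). Since $\lv(\nu)_1\neq1$ iff $t(\nu)=0$, Lemma \ref{6} says exactly that $(\lambda,\mu)\in\A$ iff $t(\lambda)=0$ or $t(\mu)=0$ or $d(\lambda)\land d(\mu)=0$.

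First I would unwind the definition of $\sim$. Suppose $(\lambda,\mu)\sim(\lambda',\mu')$ with witnesses $m,n\in\N^k\setminus\{0\}$, and set $c:=m-n\in\Z^k$. Comparing ranges in $\lambda\circ\lambda^{\sim,m}=\lambda'\circ\lambda^{\sim,n}$ gives $r(\lambda)=r(\lambda')$ and $r(\mu)=r(\mu')$; comparing sources and using $s=r-d$ gives $d(\lambda')=d(\lambda)+c$ and $d(\mu')=d(\mu)+c$, whence by the identity above $d(\lambda')\land d(\mu')=(d(\lambda)\land d(\mu))+c$. Comparing level vectors, the equalities $\lv(\lambda)\times(1,\dots,1)=\lv(\lambda')\times(1,\dots,1)$ (with tails of lengths $|m|$ and $|n|$) say precisely that $\lv(\lambda)$ and $\lv(\lambda')$ agree after deleting trailing $1$'s and that $t(\lambda')=t(\lambda)+|c|$; likewise $t(\mu')=t(\mu)+|c|$.

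The heart of the argument is to show that $(\lambda',\mu')\in\A$ forces $|c|=0$, which yields $\lv(\lambda')=\lv(\lambda)$ and $\lv(\mu')=\lv(\mu)$ and hence $[(\lambda,\mu)]\subseteq S$. If $|c|>0$ then $t(\lambda'),t(\mu')>0$, so Lemma \ref{6} applied to $(\lambda',\mu')\in\A$ gives $d(\lambda')\land d(\mu')=0$, i.e.\ $c=-(d(\lambda)\land d(\mu))\leq0$, a contradiction. If $|c|<0$ then $t(\lambda),t(\mu)>0$, so Lemma \ref{6} applied to $(\lambda,\mu)\in\A$ gives $d(\lambda)\land d(\mu)=0$, whence $d(\lambda')\land d(\mu')=c$; but a meet of elements of $\N^k$ lies in $\N^k$, so $c\geq0$, again a contradiction. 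Thus $|c|=0$. I expect this case split to be the main obstacle, since it is exactly the point where the minimality built into $\A$ must be exploited for both pairs simultaneously.

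For the reverse inclusion $S\subseteq[(\lambda,\mu)]$ I would take $(\lambda',\mu')\in S$ and extract from $r(\lambda')=r(\lambda)$, $\lv(\lambda')=\lv(\lambda)$ and the relation $s(\lambda')=s(\mu')$ a common $c\in\Z^k$ with $|c|=0$ and $d(\lambda')=d(\lambda)+c$, $d(\mu')=d(\mu)+c$; then Lemma \ref{6} (using $t(\lambda')=t(\lambda)$, $t(\mu')=t(\mu)$ and the meet identity) shows $(\lambda',\mu')\in\A$, and choosing $m,n\in\N^k\setminus\{0\}$ with $m-n=c$ and $|m|=|n|$ (e.g.\ by padding the positive and negative parts of $c$ by $(1,\dots,1)$) exhibits $(\lambda',\mu')\sim(\lambda,\mu)$. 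Finally, the second assertion follows at once from the meet identity: when $d(\lambda)\land d(\mu)=0$, the common $c$ attached to any $(\lambda',\mu')\in S$ satisfies $c=d(\lambda')\land d(\mu')\geq0$ and $|c|=0$, forcing $c=0$ and hence $(\lambda',\mu')=(\lambda,\mu)$, so that $[(\lambda,\mu)]=S=\{(\lambda,\mu)\}$.
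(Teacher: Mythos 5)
Your proof is correct and follows essentially the same route as the paper's: both inclusions hinge on applying Lemma~\ref{6} to each of the two pairs in order to force $|m|=|n|$ (your $|c|=0$), and the reverse inclusion rests on the observation that equal level vectors force $|s(\lambda')|=|s(\lambda)|$ together with nonnegativity of the meet $d(\lambda')\land d(\mu')$. Your bookkeeping via $c=m-n$ and trailing-$1$ counts, and your uniform choice of witnesses $m,n$ with $m-n=c$, are only cosmetic repackagings of the paper's case split.
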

\begin{proof}$~$\\
$\subseteq$: Let $(\lambda',\mu')\in \A$ such that $(\lambda',\mu')\sim(\lambda,\mu)$. Then $(\lambda',\mu')\in \hat\A$ and further there are $m,n\in\N^k\setminus\{0\}$ such that 
\begin{equation}
\lambda\circ\lambda^{\sim,m}=\lambda'\circ\lambda^{\sim,n} \text{ and }\mu\circ\lambda^{\sim,m}=\mu'\circ\lambda^{\sim,n}.
\end{equation}
It follows from (1) that $r(\lambda')=r(\lambda)$ and $r(\mu')=r(\mu)$. Assume that $|n|>|m|$. Then (1) implies that $\lv(\lambda)_1,\lv(\mu)_1=1$. Hence, by Lemma \ref{6}, $d(\lambda)\land d(\mu)=0$. But, by (1), $d(\lambda)\land d(\mu)=d(\lambda')+n-m\land d(\mu')+n-m$. It follows that $n\leq m$ (if there were an $i\in\{1,\dots,k\}$ such that $n_i>m_i$, then $min(d(\lambda')_i+n_i-m_i,d(\mu')_i+n_i-m_i)\geq n_i-m_i>0~\lightning$). But this contradicts the assumption $|n|>|m|$. Hence $|n|\leq |m|$. By symmetry we also get $|m|\leq |n|$ and hence $|n|=|m|$. Now it follows from (1) that $\lv(\lambda')=\lv(\lambda)$ and $\lv(\mu')=\lv(\mu)$.\\
\\
$\supseteq:$ Let $(\lambda',\mu')\in \hat \A$ such that $r(\lambda')=r(\lambda)$, $r(\mu')=r(\mu)$, $\lv(\lambda')=\lv(\lambda)$ and $\lv(\mu')=\lv(\mu)$. By Lemma \ref{6}, $\lv(\lambda)_1\neq 1$ or $\lv(\mu)_1\neq 1$ or $d(\lambda)\land d(\mu)=0$.\\
\\
\underline{case 1} {\it Assume that $\lv(\lambda)_1\neq 1$ or $\lv(\mu)_1\neq 1$.}\\
Then $(\lambda',\mu')\in \A$ by Lemma \ref{6}. Set $m:=d(\lambda')$ and $n:=d(\lambda)$. One checks easily that $\lambda\circ\lambda^{\sim,m}=\lambda'\circ\lambda^{\sim,n}$ and $\mu\circ\lambda^{\sim,m}=\mu'\circ\lambda^{\sim,n}$. Hence $(\lambda',\mu')\sim(\lambda,\mu)$.\\
\\
\underline{case 2} {\it Assume that $d(\lambda)\land d(\mu)=0$.}\\
Clearly $\lv(\lambda')=\lv(\lambda)$, $\lv(\lambda)\in\{1,\dots,l\}^{|r(\lambda)-s(\lambda)|}=\{1,\dots,l\}^{|r(\lambda)|-|s(\lambda)|}$ and $\lv(\lambda')\in\{1,\dots,l\}^{|r(\lambda')-s(\lambda')|}=\{1,\dots,l\}^{|r(\lambda)|-|s(\lambda')|}$ imply that 
\begin{equation}
|s(\lambda')|=|s(\lambda)|.
\end{equation}
Further 
\begin{equation}
d(\lambda)\land d(\mu)=0 \Leftrightarrow r(\lambda)-s(\lambda)\land r(\mu)-\underbrace{s(\mu)}_{=s(\lambda)}=0\Leftrightarrow r(\lambda)\land r(\mu)=s(\lambda).
\end{equation}
It follows that 
\begin{equation}
s(\lambda')\leq r(\lambda')\land r(\mu')=r(\lambda)\land r(\mu)\overset{(3)}{=}s(\lambda). 
\end{equation}
Hence $s(\mu')=s(\lambda')\overset{(2),(4)}{=}s(\lambda)=s(\mu)$. Thus $(\lambda',\mu')=(\lambda,\mu)$.\\
\\
It remains to show that $[(\lambda,\mu)]=\{(\lambda,\mu)\}$ if $d(\lambda)\land d(\mu)=0$. Let $(\lambda',\mu')\in \A$ such that $(\lambda',\mu')\sim(\lambda,\mu)$. Then, by the inclusion ``$\subseteq$'' shown above, $r(\lambda')=r(\lambda)$, $r(\mu')=r(\mu)$, $\lv(\lambda')=\lv(\lambda)$ and $\lv(\mu')=\lv(\mu)$. It follows from case 2 right above that $(\lambda',\mu')=(\lambda,\mu)$.
\end{proof}

If $(\lambda,\mu)\in \A$ such that $d(\lambda)\land d(\mu)\neq 0$, it can happen that $[(\lambda,\mu)]$ has more than one element. For example let $k,l=2$, $\lambda=((1,1),(1,0),(2))$ and $\lambda'=((1,1),(0,1),(2))$. Then $(\lambda,\lambda),(\lambda',\lambda')\in \A$ by Lemma \ref{6} and $(\lambda,\lambda)\sim(\lambda',\lambda')$ by Lemma \ref{7}. The next lemma shows that the terms $\lambda\mu^*$, where $(\lambda,\mu)$ ranges over a given equivalence class, agree in $KP_R(\Lambda)$.
\begin{lemma}\label{8}
Let $(\lambda,\mu),(\lambda',\mu')\in \A$ such that $(\lambda,\mu)\sim(\lambda',\mu')$. Then $\lambda\mu^*=\lambda'(\mu')^*$ in $KP_R(\Lambda)$.
\end{lemma}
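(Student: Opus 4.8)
The plan is to prove a single \emph{refinement} identity for $\lambda\mu^{*}$ via (KP4) and then match the resulting sums term by term. The naive idea --- to append the tail $\lambda^{\sim,m}$ directly and hope that $\lambda\mu^{*}=(\lambda\circ\lambda^{\sim,m})(\mu\circ\lambda^{\sim,m})^{*}$ --- does \emph{not} work: by (KP2) the right-hand side equals $\lambda\gamma\gamma^{*}\mu^{*}$ with $\gamma=\lambda^{\sim,m}$, and $\gamma\gamma^{*}$ is only one of the $l^{|m|}$ summands of $s(\lambda)$ produced by (KP4). This is the main obstacle, and the way around it is to sum over \emph{all} tails of degree $m$ at once.

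First I would record, using Lemma \ref{7}, that $(\lambda,\mu)\sim(\lambda',\mu')$ forces $r(\lambda)=r(\lambda')$, $r(\mu)=r(\mu')$, $\lv(\lambda)=\lv(\lambda')$ and $\lv(\mu)=\lv(\mu')$. Writing $s:=s(\lambda)=s(\mu)$ and $s':=s(\lambda')=s(\mu')$, the equality $\lv(\lambda)=\lv(\lambda')$ of tuples forces $|d(\lambda)|=|d(\lambda')|$, and since $r(\lambda)=r(\lambda')$ this gives $|s|=|s'|$. Taking witnesses $m,n\in\N^{k}\setminus\{0\}$ for the equivalence and equating sources in $\lambda\circ\lambda^{\sim,m}=\lambda'\circ\lambda^{\sim,n}$ yields $s-m=s'-n=:w$; applying $|\cdot|$ together with $|s|=|s'|$ then gives $|m|=|n|$.

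Next I would establish the refinement identity. Since $s=s(\lambda)=s(\mu)$ and $m\neq 0$, I can write $\lambda=\lambda s$ and expand $s=\sum_{\eta\in s\Lambda^{m}}\eta\eta^{*}$ by (KP4); using (KP2) in the forms $\lambda\eta=\lambda\circ\eta$ and $\eta^{*}\mu^{*}=(\mu\circ\eta)^{*}$ (valid because $r(\eta)=s=s(\mu)$) gives
\[
\lambda\mu^{*}=\sum_{\eta\in s\Lambda^{m}}(\lambda\circ\eta)(\mu\circ\eta)^{*},
\]
and likewise $\lambda'(\mu')^{*}=\sum_{\eta'\in s'\Lambda^{n}}(\lambda'\circ\eta')(\mu'\circ\eta')^{*}$.

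Finally I would match the two sums. Both index sets are parametrised by level vectors: $s\Lambda^{m}=\{(s,w,q)\mid q\in\{1,\dots,l\}^{|m|}\}$ and $s'\Lambda^{n}=\{(s',w,q)\mid q\in\{1,\dots,l\}^{|n|}\}$, so since $|m|=|n|$ the assignment $q\mapsto q$ is a bijection between them. For a fixed $q$, the composition rule of the standard $k$-graph gives $\lambda\circ(s,w,q)=(r(\lambda),w,\lv(\lambda)\times q)$ and $\lambda'\circ(s',w,q)=(r(\lambda'),w,\lv(\lambda')\times q)$, which coincide because $r(\lambda)=r(\lambda')$ and $\lv(\lambda)=\lv(\lambda')$; the analogous computation with $\mu,\mu'$ gives $\mu\circ(s,w,q)=\mu'\circ(s',w,q)$. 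Hence the summands agree term by term, and summing over $q$ yields $\lambda\mu^{*}=\lambda'(\mu')^{*}$. The only computations needing care are the bookkeeping of the index sets $s\Lambda^{m}$, $s'\Lambda^{n}$ and the level-vector concatenations, all of which are routine once $|m|=|n|$ and $s-m=s'-n$ are in hand.
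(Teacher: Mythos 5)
Your proposal is correct and follows essentially the same route as the paper: both refine $\lambda\mu^{*}$ into $\sum_{\eta\in s(\lambda)\Lambda^{m}}(\lambda\circ\eta)(\mu\circ\eta)^{*}$ via (KP2) and (KP4) and then match the two refined sums. The only difference is presentational: the paper isolates the distinguished term $\eta=\lambda^{\sim,m}$ and leaves the matching of the remaining terms as ``one checks easily,'' whereas you match all terms at once by the explicit level-vector bijection after establishing $|m|=|n|$ and $s-m=s'-n$, which is precisely the omitted verification.
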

\begin{proof}
Since $(\lambda,\mu)\sim(\lambda',\mu')$, there are $m,n\in\N^k\setminus\{0\}$ such that $\lambda\circ\lambda^{\sim,m}=\lambda'\circ\lambda^{\sim,n}$ and $\mu\circ\lambda^{\sim,m}=\mu'\circ\lambda^{\sim,n}$. Hence
 \begin{align*}
 &\lambda\mu^*-\sum\limits_{\substack{\xi\in s(\lambda)\Lambda^m,\\\xi\neq \lambda^{s(\lambda),m}}}(\lambda\circ\xi)(\mu\circ\xi)^*\\
=&\lambda s(\lambda)\mu^*-\sum\limits_{\substack{\xi\in s(\lambda)\Lambda^m,\\\xi\neq \lambda^{s(\lambda),m}}}\lambda\xi\xi^*\mu^*\\
 =&\lambda(s(\lambda)-\sum\limits_{\substack{\xi\in s(\lambda)\Lambda^m,\\\xi\neq \lambda^{s(\lambda),m}}}\xi\xi^*)\mu^*\\
 =&\lambda\lambda^{\sim,m}(\lambda^{\sim,m})^*\mu^*\\
 =&(\lambda\circ\lambda^{\sim,m})(\mu\circ\lambda^{\sim,m})^*\\
=&(\lambda'\circ\lambda^{\sim,n})(\mu'\circ\lambda^{\sim,n})^*\\
=&\lambda'\lambda^{\sim,n}(\lambda^{\sim,n})^*(\mu')^*\\
 =&\lambda'(s(\lambda')-\sum\limits_{\substack{\zeta\in s(\lambda')\Lambda^n,\\\zeta\neq \lambda^{s(\lambda'),n}}}\zeta\zeta^*)(\mu')^*\\
 =&\lambda' s(\lambda')(\mu')^*-\sum\limits_{\substack{\zeta\in s(\lambda')\Lambda^n,\\\zeta\neq \lambda^{s(\lambda'),n}}}\lambda'\zeta\zeta^*(\mu')^*\\
=&\lambda'(\mu')^*-\sum\limits_{\substack{\zeta\in s(\lambda')\Lambda^n,\\\zeta\neq \lambda^{s(\lambda'),n}}}(\lambda'\circ\zeta)(\mu'\circ\zeta)^*
 \end{align*}
by (KP2) and (KP4). One checks easily that \[\sum\limits_{\substack{\xi\in s(\lambda)\Lambda^m,\\\xi\neq \lambda^{s(\lambda),m}}}(\lambda\circ\xi)(\mu\circ\xi)^*=\sum\limits_{\substack{\zeta\in s(\lambda')\Lambda^n,\\\zeta\neq \lambda^{s(\lambda'),n}}}(\lambda'\circ\zeta)(\mu'\circ\zeta)^*.\]
Thus $\lambda\mu^*=\lambda'(\mu')^*$.
\end{proof}

In the proof of Theorem \ref{14} we will use the definitions and lemmas below.
\begin{definition}\label{9}
Let $s,t\in \N$, $p=(p_s,\dots,p_1)\in \{1,\dots,l\}^s$ and $q=(q_t,\dots,q_1)\in \{1,\dots,l\}^t$. Then we write $p\sim q$ iff $p_{s-i}=q_{t-i}$ for any $i\in\{0,\dots,(s\land t)-1\}$.
\end{definition}
\begin{definition}\label{10}
For any $v,w\in \Lambda^0$, $m,n\in \N^k$, $p\in \{1,\dots,l\}^s$ and $q\in \{1,\dots,l\}^t$ where $s,t\in \N$ such that $|m|-s=|n|-t\geq 0$ define
\begin{align*}
S(v,w,m,n,p,q):=\{(\alpha,\beta)\in v\Lambda^{m}\times w\Lambda^{n}\mid \lv(\alpha)=p \times r,\lv(\beta)=q \times r \text{ for some }r\in \{1,\dots,l\}^{|m|-s}\}.
\end{align*}
\end{definition}
\begin{lemma}\label{11}
Let $\lambda, \mu\in \Lambda^{\neq 0}$ such that $r(\lambda)=r(\mu)$. 
Set
\[S(\lambda,\mu):=\{(\alpha,\beta)\in \Lambda\times \Lambda\mid\lambda\circ\alpha=\mu\circ\beta,~d(\lambda\circ\alpha)=d(\lambda)\lor d(\mu)\}.\]
Then 
\begin{align*}
S(\lambda,\mu)=&S(s(\lambda),s(\mu),(d(\mu)-d(\lambda))\lor 0,(d(\lambda)-d(\mu))\lor 0,\\
&(\lv(\mu)_{|d(\mu)|-|d(\lambda)|},\dots,\lv(\mu)_1),(\lv(\lambda)_{|d(\lambda)|-|d(\mu)|},\dots,\lv(\lambda)_1))
\end{align*}
if $\lv(\lambda)\sim  \lv(\mu)$ and $S(\lambda,\mu)=\emptyset$ otherwise.
\end{lemma}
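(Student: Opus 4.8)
The plan is to characterize membership in $S(\lambda,\mu)$ by unwinding the two defining conditions $\lambda\circ\alpha=\mu\circ\beta$ and $d(\lambda\circ\alpha)=d(\lambda)\lor d(\mu)$ directly in the standard $k$-graph, where every morphism is a triple consisting of a range, a source and a level vector, and where composition concatenates level vectors via $\times$.

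First I would extract the degree information. Since $d$ is a functor, $d(\lambda\circ\alpha)=d(\lambda)+d(\alpha)$ and $d(\mu\circ\beta)=d(\mu)+d(\beta)$, so the condition $d(\lambda\circ\alpha)=d(\lambda)\lor d(\mu)$ forces $d(\alpha)=(d(\lambda)\lor d(\mu))-d(\lambda)=(d(\mu)-d(\lambda))\lor 0=m$ and likewise $d(\beta)=n$. Together with the requirement $r(\alpha)=s(\lambda)$, $r(\beta)=s(\mu)$ (needed for the composites to be defined), this says exactly $\alpha\in s(\lambda)\Lambda^{m}$ and $\beta\in s(\mu)\Lambda^{n}$, matching the ambient sets of Definition \ref{10}. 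Moreover, since $r(\lambda)=r(\mu)$ and the two composites have equal degree, their sources agree automatically (as $s(\xi)=r(\xi)-d(\xi)$), so $s(\alpha)=s(\beta)$ is free and the equation $\lambda\circ\alpha=\mu\circ\beta$ collapses to the single level-vector identity $\lv(\lambda)\times\lv(\alpha)=\lv(\mu)\times\lv(\beta)$.

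The heart of the argument is to analyze this concatenation equality position by position. Writing $A:=|d(\lambda)|$ and $B:=|d(\mu)|$ and noting $A+|m|=B+|n|$, I would compare the two tuples in three consecutive blocks: the overlap of $\lv(\lambda)$ with $\lv(\mu)$, the block covered only by the longer level vector, and the common tail. The overlap block yields precisely $\lv(\lambda)_{A-i}=\lv(\mu)_{B-i}$ for $0\le i\le (A\land B)-1$, which is the relation $\lv(\lambda)\sim\lv(\mu)$ of Definition \ref{9}; if this fails the equation has no solution and $S(\lambda,\mu)=\emptyset$. Assuming $\lv(\lambda)\sim\lv(\mu)$ and taking, by the symmetry in $\lambda,\mu$, the case $B\ge A$, the middle block forces the first $s=B-A$ entries of $\lv(\alpha)$ to equal $p=(\lv(\mu)_{s},\dots,\lv(\mu)_1)$ while $q$ is the empty tuple, and the tail block forces the remaining $|m|-s=|n|$ entries of $\lv(\alpha)$ to coincide with $\lv(\beta)$. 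Setting $r:=\lv(\beta)\in\{1,\dots,l\}^{|m|-s}$ gives exactly $\lv(\alpha)=p\times r$ and $\lv(\beta)=q\times r$, which is the defining condition of $S(s(\lambda),s(\mu),m,n,p,q)$, so the two sets coincide; the case $A\ge B$ follows by interchanging $\lambda$ and $\mu$, under which the stated formula is symmetric with $p$ and $q$ swapping roles.

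I expect the only delicate part to be the index bookkeeping: keeping straight the right-to-left convention $\lv(\cdot)=(\lv(\cdot)_{|d(\cdot)|},\dots,\lv(\cdot)_1)$ together with the concatenation $\times$, and checking the compatibility $|m|-s=|n|-t\ge 0$ demanded by Definition \ref{10}. The latter reduces to the identity $\sum_i\max(d(\mu)_i-d(\lambda)_i,0)-(|d(\mu)|-|d(\lambda)|)=\sum_i\max(d(\lambda)_i-d(\mu)_i,0)$, which is immediate entrywise. Once the three-block decomposition is set up correctly, both inclusions are routine and the emptiness clause is automatic from the overlap block.
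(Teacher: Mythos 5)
Your proof is correct and is exactly the direct verification the paper has in mind when it declares the lemma ``Straightforward'': reduce the defining equation to equality of ranges, degrees, and concatenated level vectors, then read off the three blocks of $\lv(\lambda)\times\lv(\alpha)=\lv(\mu)\times\lv(\beta)$, with the overlap block giving the $\lv(\lambda)\sim\lv(\mu)$ obstruction and the remaining blocks giving $p$, $q$ and the common tail $r$ of Definition \ref{10}. The index bookkeeping and the compatibility check $|m|-s=|n|-t\geq 0$ are handled correctly, so nothing is missing.
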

\begin{proof}
Straightforward.
\end{proof}
\begin{lemma}\label{12}
Let $v,w,m,n,p,q,s,t$ be as in Definition \ref{10}. Further let $\hat n\in \N^k$ be such that $\hat n\leq m\land n$ and $|\hat n|\leq |m|-s$. Then
\[\sum\limits_{(\alpha,\beta)\in S(v,w,m,n,p,q)}\alpha\beta^*=\sum\limits_{(\alpha,\beta)\in S(v,w,m-\hat n,n-\hat n,p,q)}\alpha\beta^*.\]
\end{lemma}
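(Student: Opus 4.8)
The plan is to parameterize both sums by the common tail level vector and, inside each summand, to peel off the bottom piece of degree $\hat n$. Write $N:=|m|-s=|n|-t\ge 0$ for the length of the shared tail, so that a pair in $S(v,w,m,n,p,q)$ is exactly a pair $\alpha=(v,v-m,p\times r)$, $\beta=(w,w-n,q\times r)$ with $r\in\{1,\dots,l\}^{N}$. Since $|\hat n|\le N$, every such $r$ factors uniquely as $r=r'\times\hat r$ with $r'\in\{1,\dots,l\}^{N-|\hat n|}$ and $\hat r\in\{1,\dots,l\}^{|\hat n|}$ (reading $\hat r$ off the bottom $|\hat n|$ coordinates, per the level-vector indexing convention). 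Using $\hat n\le m\land n$ and the factorization property, I would write $\alpha=\alpha'\circ\gamma$ and $\beta=\beta'\circ\delta$, where $\alpha'=(v,v-(m-\hat n),p\times r')$, $\beta'=(w,w-(n-\hat n),q\times r')$, $\gamma=(v-(m-\hat n),v-m,\hat r)$ and $\delta=(w-(n-\hat n),w-n,\hat r)$. As $r'$ ranges over $\{1,\dots,l\}^{N-|\hat n|}$ the pairs $(\alpha',\beta')$ range bijectively over $S(v,w,m-\hat n,n-\hat n,p,q)$, which identifies the right-hand sum with $\sum_{r'}\alpha'(\beta')^*$.

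Next, using (KP2) I would rewrite each left-hand summand as $\alpha\beta^*=(\alpha'\circ\gamma)(\beta'\circ\delta)^*=\alpha'\,\gamma\delta^*\,(\beta')^*$, and, since $\alpha'$ and $\beta'$ depend only on $r'$ while $\gamma,\delta$ depend only on $\hat r$, split the left-hand sum as
\[\sum_{(\alpha,\beta)\in S(v,w,m,n,p,q)}\alpha\beta^*=\sum_{r'}\alpha'\Big(\sum_{\hat r}\gamma\delta^*\Big)(\beta')^*.\]
The crux is then to evaluate the inner sum $\sum_{\hat r}\gamma\delta^*$ and to show it acts as $s(\alpha')$ between $\alpha'$ and $(\beta')^*$. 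Here $\gamma$ and $\delta$ share degree $\hat n$ and level vector $\hat r$, but have possibly different sources $s(\gamma)=v-m$ and $s(\delta)=w-n$.

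This vertex bookkeeping is the main obstacle, and I would dispose of it by a dichotomy. If $v-m\neq w-n$, then (KP1)--(KP2) give $\gamma\delta^*=\gamma\,s(\gamma)s(\delta)\,\delta^*=0$ for every $\hat r$, so the left-hand side is $0$; meanwhile $s(\alpha')=v-m+\hat n\neq w-n+\hat n=s(\beta')$ forces $\alpha'(\beta')^*=0$ for every $r'$, so the right-hand side is $0$ as well. If $v-m=w-n$, then $\gamma$ and $\delta$ have equal range, source and level vector, hence $\gamma=\delta$, and $\gamma$ runs exactly over $(v-m+\hat n)\Lambda^{\hat n}$ as $\hat r$ ranges over $\{1,\dots,l\}^{|\hat n|}$; thus (KP4), or a trivial single-term check when $\hat n=0$, yields $\sum_{\hat r}\gamma\delta^*=\sum_{\gamma\in(v-m+\hat n)\Lambda^{\hat n}}\gamma\gamma^*=v-m+\hat n=s(\alpha')$. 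Plugging this back and using $\alpha'\,s(\alpha')=\alpha'$ from (KP2) collapses the left-hand side to $\sum_{r'}\alpha'(\beta')^*$, which is the right-hand side. The only points needing genuine care are the correct reading of the indexing convention when splitting $r=r'\times\hat r$ and the degenerate case $\hat n=0$, where the inner sum reduces to a single vertex term.
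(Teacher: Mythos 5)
Your proof is correct and follows exactly the route the paper intends: its entire proof of Lemma \ref{12} is the single line ``Follows from (KP4)'', and your argument---factoring each pair as $(\alpha'\circ\gamma,\beta'\circ\delta)$ along the split $r=r'\times\hat r$ and collapsing the inner sum $\sum_{\hat r}\gamma\delta^*$ to the vertex $s(\alpha')$ via (KP4)---is precisely the computation being elided. Your extra care with the case $s(\alpha)\neq s(\beta)$ (both sides vanish by (KP1)--(KP2)) and with $\hat n=0$ is a welcome completion of details the paper leaves implicit.
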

\begin{proof}
Follows from (KP4).
\end{proof}
\begin{lemma}\label{13}
Let $n\in\N^k\setminus\{0\}$, $\lambda,\mu\in \Lambda$ and $v\in \Lambda^0$ such that $s(\lambda)=v=s(\mu)$. Let $i_1,\dots,i_{|n|}\in \{1,\dots,k\}$ such that $n=e_{i_1}+\dots+e_{i_{|n|}}$. Then
\[\sum\limits_{\substack{\xi\in v\Lambda^n,\\\xi\neq \lambda^{v,n}}}(\lambda\circ\xi)(\mu\circ\xi)^*=\sum\limits_{\substack{1\leq p\leq |n|,\\2\leq q\leq l}}(\lambda\circ\xi_{p,q})(\mu\circ\xi_{p,q})^*\]
where $\xi_{p,q}\in v\Lambda^{\sum\limits_{j=0}^{p-1}e_{i_{|n|-j}}}$ and $\lv(\xi_{p,q})=(1,\dots ,1,q)$ for any $1\leq p\leq |n|$ and $2\leq q\leq l$.
\end{lemma}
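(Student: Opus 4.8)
The plan is to reduce the asserted equation to a purely ``inner'' identity and then to prove the latter by a telescoping argument driven by (KP4). Since $r(\xi)=v=s(\lambda)=s(\mu)$ for every $\xi\in v\Lambda^n$, relation (KP2) gives $(\lambda\circ\xi)(\mu\circ\xi)^*=\lambda\xi\xi^*\mu^*$, and likewise $(\lambda\circ\xi_{p,q})(\mu\circ\xi_{p,q})^*=\lambda\xi_{p,q}\xi_{p,q}^*\mu^*$. Hence both sides of the claimed equation arise by applying $\lambda(\,\cdot\,)\mu^*$ to the two sums $\sum_{\xi\neq\lambda^{v,n}}\xi\xi^*$ and $\sum_{p,q}\xi_{p,q}\xi_{p,q}^*$, so it suffices to prove
\[\sum_{\substack{\xi\in v\Lambda^n,\\\xi\neq\lambda^{v,n}}}\xi\xi^* \;=\; \sum_{\substack{1\le p\le|n|,\\2\le q\le l}}\xi_{p,q}\xi_{p,q}^*\]
in $KP_R(\Lambda)$. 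By (KP4) the left-hand side equals $v-\lambda^{v,n}(\lambda^{v,n})^*$, so the remaining task is to evaluate $\sum_{p,q}\xi_{p,q}\xi_{p,q}^*$.

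For $0\le p\le|n|$ I would set $\rho_p:=\lambda^{v,\sum_{j=0}^{p-1}e_{i_{|n|-j}}}$, the all-ones path at $v$ of degree $\sum_{j=0}^{p-1}e_{i_{|n|-j}}$, with $\rho_0:=v$; comparing degrees shows $\rho_{|n|}=\lambda^{v,n}$. The key step is to verify, for fixed $p$, the factorization $\xi_{p,q}=\rho_{p-1}\circ\theta_q$, where $\theta_q$ is the edge of degree $e_{i_{|n|-p+1}}$ with $r(\theta_q)=s(\rho_{p-1})$ and level $q$; in particular $\rho_p=\rho_{p-1}\circ\theta_1$. This is done by matching ranges, degrees, and level vectors: the relevant degree splits as $\sum_{j=0}^{p-1}e_{i_{|n|-j}}=\big(\sum_{j=0}^{p-2}e_{i_{|n|-j}}\big)+e_{i_{|n|-p+1}}$, and under the composition rule together with the convention $\lv(\cdot)=(\lv(\cdot)_{|d(\cdot)|},\dots,\lv(\cdot)_1)$ one gets $\lv(\rho_{p-1})\times(q)=(1,\dots,1,q)=\lv(\xi_{p,q})$, which by the factorization property identifies the two paths.

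With this identification in place, $s(\rho_{p-1})\Lambda^{e_{i_{|n|-p+1}}}=\{\theta_1,\dots,\theta_l\}$, so (KP4) gives $s(\rho_{p-1})=\sum_{q=1}^{l}\theta_q\theta_q^*$. Multiplying on the left by $\rho_{p-1}$ and on the right by $\rho_{p-1}^*$ and using (KP2) yields the one-step identity
\[\rho_{p-1}\rho_{p-1}^* \;=\; \rho_p\rho_p^* + \sum_{q=2}^{l}\xi_{p,q}\xi_{p,q}^*,\]
because the $q=1$ summand is $(\rho_{p-1}\circ\theta_1)(\rho_{p-1}\circ\theta_1)^*=\rho_p\rho_p^*$ while the summands for $q\ge2$ are $\xi_{p,q}\xi_{p,q}^*$. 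Summing over $p=1,\dots,|n|$ telescopes the terms $\rho_p\rho_p^*$, leaving $\rho_0\rho_0^*-\rho_{|n|}\rho_{|n|}^*=v-\lambda^{v,n}(\lambda^{v,n})^*$ on one side and $\sum_{p,q}\xi_{p,q}\xi_{p,q}^*$ on the other. This is precisely the inner identity, and the lemma follows.

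I expect the only genuine obstacle to be the bookkeeping in the factorization $\xi_{p,q}=\rho_{p-1}\circ\theta_q$: one must track the reversed indexing of level vectors and the suffix degree $\sum_{j=0}^{p-1}e_{i_{|n|-j}}$ correctly under the composition rule $p\times q$. Once that identification is pinned down, every remaining step is a direct application of (KP2) and (KP4) with no further subtlety.
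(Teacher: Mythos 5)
Your proof is correct, and it reaches the identity by a route that is organized differently from the paper's. The paper argues by induction on $|n|$: in the inductive step it factors each $\xi\in v\Lambda^n$ as $\xi_1\circ\xi_2$ with $d(\xi_1)=e_{i_{|n|}}$, splits the sum over $\xi\neq\lambda^{v,n}$ into three pieces according to whether $\xi_1$ and/or $\xi_2$ is the all-ones path, applies the induction hypothesis to one piece and (KP4) to recombine the other two, and carries $\lambda$ and $\mu$ through the entire computation. You instead strip off $\lambda$ and $\mu$ at the outset, reducing to the projection identity $\sum_{\xi\neq\lambda^{v,n}}\xi\xi^*=\sum_{p,q}\xi_{p,q}\xi_{p,q}^*$, and prove that via the one-step expansion $\rho_{p-1}\rho_{p-1}^*=\rho_p\rho_p^*+\sum_{q\geq 2}\xi_{p,q}\xi_{p,q}^*$ followed by telescoping. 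The underlying combinatorics is the same --- both arguments peel off one edge of degree $e_{i_{|n|-p+1}}$ at a time from the range end, invoking (KP4) at each step, and your telescoping sum is essentially the paper's induction unrolled --- but your version avoids the three-way case split and the $\lambda,\mu$ bookkeeping, at the modest cost of verifying the factorization $\xi_{p,q}=\rho_{p-1}\circ\theta_q$, which you do correctly by matching range, degree and level vector (these determine a path of the standard $k$-graph uniquely). The only point worth making explicit is that for $p=1$ the factor $\rho_0=v$ is a vertex, so the step $\rho_0\theta_q=\theta_q$ uses the relation $r(\theta_q)\theta_q=\theta_q$ from (KP2) rather than the composition relation; this is harmless.
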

\begin{proof}
We will prove the lemma by induction on $m:=|n|$.\\
\\
\underline{$m=1$} Obvious.\\
\\
\underline{$m\rightarrow m+1$} Suppose that $|n|=m+1$. Set $n':=e_{i_1}+\dots+e_{i_m}$. Then $n'+e_{i_{m+1}}=n$ and $|n'|=m$. Further set $v':=v-e_{i_{m+1}}$. Then
\begin{align*}
&\sum\limits_{\substack{\xi\in v\Lambda^n,\\\xi\neq \lambda^{v,n}}}(\lambda\circ\xi)(\mu\circ\xi)^*\\
=\quad&\sum\limits_{\substack{\xi_1\in v\Lambda^{e_{i_{m+1}}},~\xi_2\in v'\Lambda^{n'},\\\xi_1\circ\xi_2\neq \lambda^{v,n}}}(\lambda\circ\xi_1\circ\xi_2)(\mu\circ\xi_1\circ\xi_2)^*\\
=\quad&\sum\limits_{\substack{\xi_1\in v\Lambda^{e_{i_{m+1}}},~\xi_2\in v'\Lambda^{n'},\\\xi_1\neq \lambda^{v,e_{i_{m+1}}},~\xi_2\neq \lambda^{v',n'}}}(\lambda\circ\xi_1\circ\xi_2)(\mu\circ\xi_1\circ\xi_2)^*+\sum\limits_{\substack{\xi_1\in v\Lambda^{e_{i_{m+1}}},\\\xi_1\neq \lambda^{v,e_{i_{m+1}}}}}(\lambda\circ\xi_1\circ\lambda^{\sim,n'})(\mu\circ\xi_1\circ\lambda^{\sim,n'})^*\\
&+\sum\limits_{\substack{\xi_2\in v'\Lambda^{n'},\\\xi_2\neq \lambda^{v',n'}}}(\lambda\circ\lambda^{\sim,e_{i_{m+1}}}\circ\xi_2)(\mu\circ\lambda^{\sim,e_{i_{m+1}}}\circ\xi_2)^*
\end{align*}
\begin{align*}
\underset{(KP4)}{\overset{I.A.}{=}}\hspace{0.08cm}&\sum\limits_{\substack{\xi_1\in v\Lambda^{e_{i_{m+1}}},~\xi_2\in v'\Lambda^{n'},\\\xi_1\neq \lambda^{v,e_{i_{m+1}}},~\xi_2\neq \lambda^{v',n'}}}(\lambda\circ\xi_1\circ\xi_2)(\mu\circ\xi_1\circ\xi_2)^*\\
&+\sum\limits_{\substack{\xi_1\in v\Lambda^{e_{i_{m+1}}},\\\xi_1\neq \lambda^{v,e_{i_{m+1}}}}}((\lambda\circ\xi_1)(\mu\circ\xi_1)^*-\sum\limits_{\substack{\xi_2\in v'\Lambda^{n'},\\\xi_2\neq \lambda^{v',n'}}}(\lambda\circ\xi_1\circ\xi_2)(\mu\circ\xi_1\circ\xi_2)^*)\\
&+\sum\limits_{\substack{1\leq p\leq |n'|,\\2\leq q\leq l}}(\lambda\circ\lambda^{\sim,e_{i_{m+1}}}\circ\xi'_{p,q})(\mu\circ\lambda^{\sim,e_{i_{m+1}}}\circ\xi'_{p,q})^*\\
=\quad&\sum\limits_{\substack{\xi_1\in v\Lambda^{e_{i_{m+1}}},\\\xi_1\neq \lambda^{v,e_{i_{m+1}}}}}(\lambda\circ\xi_1)(\mu\circ\xi_1)^*+\sum\limits_{\substack{1\leq p\leq |n'|,\\2\leq q\leq l}}(\lambda\circ\lambda^{\sim,e_{i_{m+1}}}\circ\xi'_{p,q})(\mu\circ\lambda^{\sim,e_{i_{m+1}}}\circ\xi'_{p,q})^*
\end{align*}
where $\xi'_{p,q}\in v'\Lambda^{\sum\limits_{j=0}^{p-1}e_{i_{|n'|-j}}}$ and $\lv(\xi'_{p,q})=(1,\dots ,1,q)$ for any $1\leq p\leq |n'|$ and $2\leq q\leq l$. Clearly $v\Lambda^{e_{i_{m+1}}}\setminus\{\lambda^{v,e_{i_{m+1}}}\}=\{\xi_{1,q}\mid 2\leq q \leq l\}$. Further $\lambda^{v,e_{i_{m+1}}}\circ\xi'_{p,q}=\xi_{p+1,q}$ for any $1\leq p\leq |n'|$ and $2\leq q\leq l$ since $v'-\sum\limits_{j=0}^{p-1}e_{i_{|n'|-j}}=v-\sum\limits_{j=-1}^{p-1}e_{i_{|n'|-j}}=v-\sum\limits_{j=0}^{p}e_{i_{|n|-j}}$. Thus
\begin{align*}
&\sum\limits_{\substack{\xi_1\in v\Lambda^{e_{i_{m+1}}},\\\xi_1\neq \lambda^{v,e_{i_{m+1}}}}}(\lambda\circ\xi_1)(\mu\circ\xi_1)^*+\sum\limits_{\substack{1\leq p\leq |n'|,\\2\leq q\leq l}}(\lambda\circ\lambda^{\sim,e_{i_{m+1}}}\circ\xi'_{p,q})(\mu\circ\lambda^{\sim,e_{i_{m+1}}}\circ\xi'_{p,q})^*\\
=&\sum\limits_{2\leq q \leq l}(\lambda\circ\xi_{1,q})(\mu\circ\xi_{1,q})^*+\sum\limits_{\substack{1\leq p\leq |n'|,\\2\leq q\leq l}}(\lambda\circ\xi_{p+1,q})(\mu\circ\xi_{p+1,q})^*\\
=&\sum\limits_{2\leq q \leq l}(\lambda\circ\xi_{1,q})(\mu\circ\xi_{1,q})^*+\sum\limits_{\substack{2\leq p\leq |n|,\\2\leq q\leq l}}(\lambda\circ\xi_{p,q})(\mu\circ\xi_{p,q})^*\\
=&\sum\limits_{\substack{1\leq p\leq |n|,\\2\leq q\leq l}}(\lambda\circ\xi_{p,q})(\mu\circ\xi_{p,q})^*.
\end{align*}
\end{proof}
\begin{theorem}\label{14}
The elements $v~(v\in \Lambda^0)$, $\lambda~(\lambda\in \Lambda^{\neq 0})$, $\lambda^*~(\lambda\in \Lambda^{\neq 0})$ and $\lambda\mu^*~((\lambda,\mu)\in \R)$ form a basis for $KP_R(\Lambda)$.
\end{theorem}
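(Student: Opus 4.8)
\emph{Overview.} The statement is a basis claim, so the plan is to prove two things: that the listed elements span $KP_R(\Lambda)$, and that they are $R$-linearly independent. Spanning is a rewriting argument built on the lemmas already established; independence is where the real work lies, and I would obtain it by constructing an explicit representation and evaluating it on well-chosen vectors.

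\emph{Spanning.} By Lemma \ref{3} (with $q=d(\lambda)\lor d(\mu)$) the products $\lambda\mu^*$ with $\lambda,\mu\in\Lambda$ span $KP_R(\Lambda)$. Letting one of $\lambda,\mu$ be a vertex produces exactly the families $v$, $\lambda$, $\lambda^*$, while the case $\lambda,\mu\in\Lambda^{\neq 0}$ yields a term that vanishes unless $s(\lambda)=s(\mu)$, i.e. unless $(\lambda,\mu)\in\hat\A$. So it suffices to rewrite each $\lambda\mu^*$ with $(\lambda,\mu)\in\hat\A$ in terms of the chosen representatives. If $(\lambda,\mu)\in\A$, then by Lemma \ref{8} it equals $\lambda^{[(\lambda,\mu)]}(\mu^{[(\lambda,\mu)]})^*$ with $(\lambda^{[(\lambda,\mu)]},\mu^{[(\lambda,\mu)]})\in\R$. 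If $(\lambda,\mu)\in\hat\A\setminus\A$, I would split off a common tail, writing $\lambda=\lambda'\circ\lambda^{\sim,n}$, $\mu=\mu'\circ\lambda^{\sim,n}$, and use the identity already computed inside the proof of Lemma \ref{8},
\[\lambda\mu^* = \lambda'(\mu')^* - \sum_{\substack{\xi\in s(\lambda')\Lambda^{n}\\ \xi\neq\lambda^{\sim,n}}}(\lambda'\circ\xi)(\mu'\circ\xi)^*,\]
which comes from (KP2) and (KP4). By Lemma \ref{13} the subtracted sum equals $\sum_{p,q}(\lambda'\circ\xi_{p,q})(\mu'\circ\xi_{p,q})^*$, and since $\lv(\xi_{p,q})_1=q\geq 2$, Lemma \ref{6} puts every pair $(\lambda'\circ\xi_{p,q},\mu'\circ\xi_{p,q})$ into $\A$. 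As $(\lambda',\mu')$ has strictly smaller degree, an induction on $|d(\lambda)|+|d(\mu)|$ (degenerate factors being absorbed into the $v$, $\lambda$, $\lambda^*$ families) reduces everything to $\A$-terms, each of which collapses to its $\R$-representative by Lemma \ref{8}.

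\emph{Linear independence.} Here I would build a concrete representation. Set $X:=\Z^k\times\{1,\dots,l\}^{\{1,2,\dots\}}$ and let $F$ be the free $R$-module on $X$; a point $(v,\theta)$ should be thought of as an infinite path of range $v$ whose levels, read from the range, are $\theta$. I would let a vertex $u$ act as the projection onto $\{(u,\theta)\}$, let $\lambda\in\Lambda^{\neq 0}$ act on $(s(\lambda),\theta)$ by prepending the level word of $\lambda$ (and by $0$ otherwise), and let $\lambda^*$ act on $(r(\lambda),\theta)$ by deleting that word when it occurs as the initial segment of $\theta$ (and by $0$ otherwise). Relations (KP1)--(KP3) hold on the generators by inspection; the crucial point is (KP4): for fixed $v$ and $n$, a vector $(v,\theta)$ has a \emph{unique} degree-$n$ prefix because $\theta$ is infinite, so $\sum_{\lambda\in v\Lambda^{n}}\lambda\lambda^*$ acts precisely as the projection $v$ — this is exactly where row-finiteness and the absence of sources are used. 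Hence the generators satisfy the defining relations and induce an algebra homomorphism $\pi\colon KP_R(\Lambda)\to\operatorname{End}_R(F)$, under which $\lambda\mu^*$ replaces an initial segment $\mu$ of a vector by $\lambda$. To finish, suppose a finite $R$-combination of the proposed basis elements is zero. Using the $\Z^k$-grading (in which $\lambda$ has degree $d(\lambda)$ and $\mu^*$ degree $-d(\mu)$) I may assume it is homogeneous, so only terms of one fixed degree interact. For each occurring representative $(\lambda,\mu)\in\R$ I would apply $\pi$ to a vector $(r(\mu),\theta)$ whose levels beyond $\mu$ are chosen generically and read off the coefficient at the image $(r(\lambda),\dots)$. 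By Lemma \ref{7} the class $[(\lambda,\mu)]$ is determined by $r(\lambda),r(\mu),\lv(\lambda),\lv(\mu)$ and distinct elements of $\R$ give distinct classes, so for a generic tail only the intended term survives, forcing its coefficient to vanish.

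\emph{Main obstacle.} Verifying that $\pi$ is a homomorphism is routine; the delicate step is the last one. Two representatives can be nonzero on overlapping cylinders of $X$ (precisely when one level word is an initial segment of the other), so separating the coefficients requires showing that no genuine coincidence of outputs persists for a generic tail unless the two pairs are $\sim$-equivalent — which, since $\R$ picks one representative per class, never happens. Making this support-and-coincidence bookkeeping precise is exactly what the description of the sets $S(\lambda,\mu)$ in Lemmas \ref{11} and \ref{12}, together with Lemmas \ref{6} and \ref{7}, is designed to supply, and carrying it out carefully is where the real effort goes.
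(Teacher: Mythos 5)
Your route is genuinely different from the paper's: the paper does not construct a representation at all, but instead realizes $KP_R(\Lambda)$ via Bergman's Diamond Lemma, setting up a reduction system on the free algebra whose irreducible words are exactly the claimed basis elements and resolving all overlap ambiguities. Your spanning argument is essentially sound (it mirrors the paper's reductions (4) and (5) together with Lemmas \ref{3}, \ref{6}, \ref{8} and \ref{13}), and your representation $\pi$ on the free module over $\Z^k\times\{1,\dots,l\}^{\{1,2,\dots\}}$ does satisfy (KP1)--(KP4). The problem is the coefficient-extraction step, which you yourself identify as the crux and then dispose of with a claim that is false.

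Concretely, you assert that a coincidence of outputs cannot persist for a generic tail unless the two pairs are $\sim$-equivalent. Take $k=1$, $l=2$, $\lambda=(2,0,(2,2))$ and $\lambda'=(2,1,(2))$. By Lemma \ref{6} both $(\lambda,\lambda)$ and $(\lambda',\lambda')$ lie in $\A$, by Lemma \ref{7} each is the sole member of its class (so both lie in $\R$), and they are not $\sim$-equivalent since $\lv(\lambda)\neq\lv(\lambda')$. Yet $\pi(\lambda\lambda^*)$ and $\pi(\lambda'(\lambda')^*)$ are the projections onto the cylinders $\theta_1=\theta_2=2$ and $\theta_1=2$ respectively, so on \emph{every} vector $(2,\theta)$ with $\theta_1=\theta_2=2$ --- i.e.\ on every input at which the term $\lambda\lambda^*$ acts, for every choice of tail --- the two terms (and the vertex $2$) contribute to the same output vector. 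Hence reading off ``the coefficient at the image'' for the representative $(\lambda,\lambda)$ yields a sum of at least three coefficients, not the one you want. The phenomenon is general: whenever $\lv(\mu')$ is a proper initial segment of $\lv(\mu)$ and $\lv(\lambda')$ the corresponding initial segment of $\lv(\lambda)$ (with matching ranges), the collision is unavoidable on the whole cylinder determined by $\mu$. The fix is not hard --- order the occurring representatives by $|d(\mu)|$ and run a triangularity/induction argument, subtracting the already-determined coefficients of shorter terms, and treat $l=1$ separately since there are then no ``generic'' tails at all --- but that argument is exactly the missing substance of the independence proof, and as written your justification for it is incorrect.
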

\begin{proof}
We want to apply \cite[Theorem 15]{hp}. Set $X:=\Lambda^0\cup\Lambda^{\neq 0}\cup (\Lambda^{\neq 0})^*$ and denote by $R\X$ the free $R$-algebra generated by $X$. Consider the relations 
\begin{enumerate}[(1)]
\item for all $\lambda,\mu\in\Lambda$ with $s(\lambda) = r(\mu)$,
\[\lambda\mu = \lambda\circ\mu,~\mu^*\lambda^*=(\lambda\circ\mu)^*,\]
\item for all $\lambda,\mu\in \Lambda$,
\begin{align*}
\lambda\mu=0\text{ if  } s(\lambda)\neq r(\mu),~~\lambda^*\mu=0\text{ if  } r(\lambda)\neq r(\mu),~~\lambda\mu^*=0\text{ if  } s(\lambda)\neq s(\mu),~~\lambda^*\mu^*=0\text{ if  } r(\lambda)\neq s(\mu),
\end{align*}
\item for all $\lambda,\mu\in\Lambda^{\neq 0}$ such that $r(\lambda)=r(\mu)$,
\[\lambda^*\mu =\sum\limits_{(\alpha,\beta)\in S(\lambda,\mu)}\alpha\beta^*,\]
\item for all $v\in\Lambda^0$, $n\in\N^k\setminus\{0\}$, and $\lambda,\mu\in\Lambda$ such that $s(\lambda)=s(\mu)=v$,
\[(\lambda\circ\lambda^{v,n})(\mu\circ \lambda^{v,n})^*=\lambda\mu^*-\sum\limits_{\substack{\xi\in v\Lambda^n,\\\xi\neq \lambda^{v,n}}}(\lambda\circ\xi)(\mu\circ \xi)^*\]
and
\item for all $(\lambda,\mu)\in \A\setminus \R$,
\[\lambda\mu^*=\lambda^{[(\lambda,\mu)]}(\mu^{[(\lambda,\mu)]})^*.\]
\end{enumerate}
It follows from Lemma \ref{3} and Lemma \ref{8} that the relations (1)-(5) above generate the same ideal $I$ of $R\X$ as the relations (KP1)-(KP4) in Definition \ref{2}. Denote by $S$ the reduction system for $R\X$ defined by the relations (1)-(5) (i.e., $S$ is the set of all pairs $\sigma=(W_\sigma,f_\sigma)$ where $W_\sigma$ equals the left hand side of an equation in (1)-(5) and $f_\sigma$ the corresponding right hand side). Denote by $\X$ the semigroup of all nonempty words over $X$ and set $\overline{\X}:=X\cup\{\text{empty word}\}$. For any $A=x_1\dots x_n\in \X$ define
\begin{itemize}
\item $l(A):=n$ (the {\it length} of $A$),
\item $e(A):= \sum\limits_{\substack{1\leq i\leq n,\\x_i\in\Lambda^{\neq 0}}}i$ (the {\it entropy of $A$} ),
\item $f(A):=\sum\limits_{\substack{1\leq i\leq n,\\x_i\in\Lambda^{\neq 0}}}|d(x_i)|$  (the {\it degree value of $A$}),
\item $g(A):=\sum\limits_{\substack{1\leq i\leq n,\\x_i\in\Lambda^{\neq 0}}}\#\{j\in \{1,\dots,|d(x_i)|\}\mid \lv(x_i)_j=1\}$ (the {\it $1$-level value of $A$}) and
\item $h(A):= \#\{i\in\{1,\dots,n-1\}\mid x_ix_{i+1}=\lambda\mu^* \text{ for some }(\lambda,\mu)\in \A\setminus\R\}$ (the {\it $\A\setminus\R$-value of $A$}).
\end{itemize}
Define a partial ordering $\leq$ on $\X$ by 
\begin{align*}
&A\leq B\\
\Leftrightarrow &\big [A=B\big ]~\lor~\big[l(A)<l(B)\big]~\lor ~\big[l(A)=l(B)~\land~ e(A)<e(B)\big]~\lor\\&\big[l(A)=l(B)~\land~ e(A)=e(B)~\land f(A)<f(B)\big]~\lor\\
&\big[l(A)=l(B)~\land~e(A)=e(B)~\land f(A)=f(B)~\land~ g(A)<g(B)\big]~\lor\\
&\big[l(A)=l(B)~\land~e(A)=e(B)~\land f(A)=f(B)~\land~ g(A)=g(B)\\
&~\land~\forall C,D\in \overline\X:h(CAD)< h(CBD)\big].
\end{align*}
Then $\leq$ is a semigroup partial ordering on $\X$ compatible with $S$ and the descending chain condition is satisfied. It remains to show that all ambiguities of $S$ are resolvable. Below we list all ambiguities.
\begin{align*}
(1),(1):&~\lambda\mu\xi,\xi^*\mu^*\lambda^*~~(\lambda,\mu,\xi\in\Lambda,s(\lambda) = r(\mu),s(\mu)=r(\xi))\\
(1),(2):&~\lambda\mu\xi,\lambda\mu\zeta^*\text{ etc.}~~(\lambda,\mu,\xi,\zeta\in\Lambda, s(\lambda)= r(\mu),s(\mu)\neq r(\xi),s(\mu)\neq s(\zeta))\\
(1),(3):&~\lambda^*\mu\xi,\zeta^*\lambda^*\mu~~(\lambda,\mu\in\Lambda^{\neq 0},\xi,\zeta\in \Lambda, r(\lambda) = r(\mu),s(\mu)=r(\xi),r(\zeta)=s(\lambda))\\
(1),(4):&~\xi(\lambda\circ\lambda^{\sim,n})(\mu\circ \lambda^{\sim,n})^*,(\lambda\circ\lambda^{\sim,n})(\mu\circ \lambda^{\sim,n})^*\zeta^*~~(n\in\N^k\setminus\{0\},\lambda,\mu,\xi,\zeta\in\Lambda,\\
&\hspace{7cm} s(\lambda) = s(\mu),s(\xi)=r(\lambda),r(\mu)=s(\zeta))\\
(1),(5):&~\xi\lambda\mu^*,\lambda\mu^*\zeta^*~~((\lambda,\mu)\in\A\setminus \R, \xi,\zeta\in\Lambda, s(\xi)=r(\lambda),r(\mu)=s(\zeta))\\
(2),(2):&~\lambda\mu\xi,\lambda\mu\zeta^*\text{ etc.}~~(\lambda,\mu,\xi\in\Lambda, s(\lambda)\neq r(\mu) , s(\mu)\neq r(\xi) , s(\mu)\neq s(\zeta) ) \\
(2),(3):&~\lambda^*\mu\xi,\lambda^*\mu\zeta^*\text{ etc.}~~(\lambda,\mu,\xi,\zeta\in\Lambda,r(\lambda) = r(\mu),s(\mu)\neq r(\xi),s(\mu)\neq s(\zeta))\\
(2),(4):&~(\lambda\circ\lambda^{\sim,n})(\mu\circ \lambda^{\sim,n})^*\xi,(\lambda\circ\lambda^{\sim,n})(\mu\circ \lambda^{\sim,n})^*\zeta^*\text{ etc.}~~(n\in\N^k\setminus\{0\},\lambda,\mu,\xi,\zeta\in\Lambda,\\
&\hspace{8cm} s(\lambda) = s(\mu),r(\mu)\neq r(\xi),r(\mu)\neq s(\zeta))
\end{align*}
\begin{align*}
(2),(5):&~\lambda\mu^*\xi,\lambda\mu^*\zeta^*\text{ etc.}~~((\lambda,\mu)\in\A\setminus \R, \xi,\zeta\in\Lambda,r(\mu)\neq r(\xi),r(\mu)\neq s(\zeta))\\
(3),(3):&~-\\
(3),(4):&~\xi^*(\lambda\circ\lambda^{\sim,n})(\mu\circ \lambda^{\sim,n})^*,(\lambda\circ\lambda^{\sim,n})(\mu\circ \lambda^{\sim,n})^*\zeta~~(n\in\N^k\setminus\{0\},\lambda,\mu\in\Lambda,\xi,\zeta\in\Lambda^{\neq 0},\\
&\hspace{7cm} s(\lambda) = s(\mu),r(\xi)=r(\lambda),r(\mu)=r(\zeta))\\
(3),(5):&~\xi^*\lambda\mu^*,\lambda\mu^*\zeta~~((\lambda,\mu)\in\A\setminus \R,\xi,\zeta\in\Lambda^{\neq 0}, r(\xi)= r(\lambda),r(\mu)=r(\zeta))\\
(4),(4):&~\lambda\mu^*~~(\lambda=\lambda_1\circ\lambda^{\sim,n_1}=\lambda_2\circ\lambda^{\sim,n_2},\mu=\mu_1\circ\lambda^{\sim,n_1}=\mu_2\circ\lambda^{\sim,n_2},\lambda_1,\lambda_2,\mu_1,\mu_2\in\Lambda,\\
&\hspace{4cm} s(\lambda_1)=s(\mu_1), s(\lambda_2)=s(\mu_2),n_1,n_2\in\N^k\setminus\{0\}, n_1\neq n_2)\\
(4),(5):&~-\\
(5),(5):&~-\\
\end{align*}
We will show how to resolve the (3),(4), the (3),(5) and the (4),(4) ambiguities (the other ambiguities are relatively easy to resolve; for example it follows from Lemma \ref{6} and Lemma \ref{7} that the (1),(5) ambiguities can be resolved).\\
\\
\underline{(3),(4):}\\
Let $n\in\N^k\setminus\{0\}$, $\lambda,\mu\in\Lambda$ and $\xi,\zeta\in\Lambda^{\neq 0}$ such that $s(\lambda) = s(\mu)$, $r(\xi)=r(\lambda)$ and $r(\mu)=r(\zeta)$. We will show how to resolve the ambiguity $(\lambda\circ\lambda^{\sim,n})(\mu\circ \lambda^{\sim,n})^*\zeta$ and leave the ambiguity $\xi^*(\lambda\circ\lambda^{\sim,n})(\mu\circ \lambda^{\sim,n})^*$ to the reader. Suppose that $d(\mu)\neq 0$. Then
\xymatrixcolsep{-3pc}
\xymatrixrowsep{4pc}
\[\xymatrix{
&(\lambda\circ\lambda^{\sim,n})(\mu\circ \lambda^{\sim,n})^*\zeta\ar[rd]^-{(3)}\ar[ld]_-{(4)}&
\\
\lambda\mu^*\zeta-\sum\limits_{\substack{\xi\in s(\lambda)\Lambda^n,\\\xi\neq\lambda^{s(\lambda),n}}}(\lambda\circ\xi)(\mu\circ\xi)^*\zeta\ar[d]_-{(3)}&&(\lambda\circ\lambda^{\sim,n})\sum\limits_{(\alpha, \beta)\in S(\mu\circ \lambda^{\sim,n},\zeta)}\alpha\beta^*\ar[d]^-{(1)}
\\
{\begin{array}{c}
\lambda\sum\limits_{(\alpha',\beta')\in S(\mu,\zeta)}\alpha'(\beta')^*
\\
-\sum\limits_{\substack{\xi\in s(\lambda)\Lambda^n,\\\xi\neq\lambda^{s(\lambda),n}}}(\lambda\circ\xi)\sum\limits_{(\alpha'',\beta'')\in S(\mu\circ\xi,\zeta)}\alpha''(\beta'')^*
\end{array}}\ar[d]_-{(1)}&&{\underbrace{\sum\limits_{(\alpha, \beta)\in S(\mu\circ \lambda^{\sim,n},\zeta)}(\lambda\circ\lambda^{\sim,n}\circ\alpha)\beta^*}_{A:=}}
\\
{\begin{array}{c}
\underbrace{\sum\limits_{(\alpha',\beta')\in S(\mu,\zeta)}(\lambda\circ\alpha')(\beta')^*}_{B:=}
\\
-\underbrace{\sum\limits_{\substack{\xi\in s(\lambda)\Lambda^n,\\\xi\neq\lambda^{s(\lambda),n}}}\sum\limits_{(\alpha'',\beta'')\in S(\mu\circ\xi,\zeta)}(\lambda\circ\xi\circ\alpha'')(\beta'')^*}_{C:=}
\end{array}}&&
}
\]
\\
Suppose that $\lv(\mu)\not\sim\lv(\zeta)$. Then $A=B=C=0$ by Lemma \ref{11}. Hence we can assume that $\lv(\mu)\sim\lv(\zeta)$. Denote by $d(\alpha)$ the degree of an $\alpha$ appearing in $A$ (note that the $\alpha$'s appearing in $A$ all have the same degree by Lemma \ref{11}), by $d(\beta)$ the degree of a $\beta$ appearing in $A$, by $d(\alpha')$ the degree of an $\alpha'$ appearing in $B$ and so on. One checks easily that $d(\lambda^{\sim,n}\circ\alpha)=d(\alpha')+\delta=d(\xi\circ\alpha'')$ and $d(\beta)=d(\beta')+\delta=d(\beta'')$ where $\delta=(d(\mu\circ \lambda^{\sim,n})\lor d(\zeta))-(d(\mu)\lor d(\zeta))\geq 0$. It is an easy exercise to show that $A=B+C$ if $\delta=0$. Hence we can assume that $\delta\neq 0$.\\
\\
\underline{case 1} {\it Assume that $|d(\mu\circ\lambda^{\sim,n})|\leq |d(\zeta)|$.}\\
\\
\underline{case 1.1} {\it Assume that $\lv(\mu\circ\lambda^{\sim,n})\not\sim\lv(\zeta)$.}\\
Then, by Lemma \ref{11}, $A=0$ and $C=\sum\limits_{(\alpha'',\beta'')\in S(\mu\circ\xi,\zeta)}(\lambda\circ\xi\circ\alpha'')(\beta'')^*$ where $\xi\in s(\lambda)\Lambda^n$ has the property that $\lv(\mu\circ\xi)\sim\lv(\zeta)$. Further, also by Lemma \ref{11}, one can write $B=\sum\limits_{(\hat\alpha,\hat\beta)\in S_1}\hat\alpha\hat\beta^*$ and $C=\sum\limits_{(\tilde\alpha,\tilde\beta)\in S_2}\tilde\alpha\tilde\beta^*$ where \[S_1=S(r(\lambda),s(\zeta),d(\lambda)+d(\alpha'), d(\beta'),\lv(\lambda)\times (\lv(\zeta)_{|d(\zeta)|-|d(\mu)|},\dots,\lv(\zeta)_1),())\]
and
\[S_2=S(r(\lambda),s(\zeta),d(\lambda\circ\xi)+d(\alpha''), d(\beta''),\lv(\lambda\circ\xi)\times (\lv(\zeta)_{|d(\zeta)|-|d(\mu\circ\xi)|},\dots,\lv(\zeta)_1),()).\]
One checks easily that $\delta\leq (d(\lambda\circ\xi)+d(\alpha''))\land d(\beta'')$ and $|\delta|\leq |d(\lambda\circ\xi)+d(\alpha'')|-s$ where $s=|d(\lambda\circ\xi)|+|d(\zeta)|-|d(\mu\circ\xi)|$. Hence Lemma \ref{12} shows that $C$ reduces to $C':=\sum\limits_{(\tilde\alpha,\tilde\beta)\in S_3}\tilde\alpha\tilde\beta^*$ where
\[S_3=S(r(\lambda),s(\zeta),d(\lambda\circ\xi)+d(\alpha'')-\delta, d(\beta'')-\delta,\lv(\lambda\circ\xi)\times (\lv(\zeta)_{|d(\zeta)|-|d(\mu\circ\xi)|},\dots,\lv(\zeta)_1),()).\]
Clearly $S_1=S_3$ and hence $C'=B$.\\
\\
\underline{case 1.2} {\it Assume that $\lv(\mu\circ\lambda^{\sim,n})\sim\lv(\zeta)$.}\\
Similar to case 1.1.\\
\\
\underline{case 2} {\it Assume that $|d(\mu)|<|d(\zeta)|<|d(\mu\circ\lambda^{\sim,n})|$.}\\
\\
\underline{case 2.1} {\it Assume that $\lv(\mu\circ\lambda^{\sim,n})\not\sim\lv(\zeta)$.}\\
Then, by Lemma \ref{11}, $A=0$ and $C=\sum\limits_{\substack{\xi\in s(\lambda)\Lambda^n,\\\lv(\mu\circ\xi)\sim\lv(\zeta)}}\sum\limits_{(\alpha'',\beta'')\in S(\mu\circ\xi,\zeta)}(\lambda\circ\xi\circ\alpha'')(\beta'')^*$. Further, also by Lemma \ref{11}, one can write $B=\sum\limits_{(\hat\alpha,\hat\beta)\in S_1}\hat\alpha\hat\beta^*$ and $C=\sum\limits_{(\tilde\alpha,\tilde\beta)\in S_2}\tilde\alpha\tilde\beta^*$ where \[S_1=S(r(\lambda),s(\zeta),d(\lambda)+d(\alpha'), d(\beta'),\lv(\lambda)\times (\lv(\zeta)_{|d(\zeta)|-|d(\mu)|},\dots,\lv(\zeta)_1),())\]
and
\[S_2=S(r(\lambda),s(\zeta),d(\lambda\circ\xi)+d(\alpha''), d(\beta''),\lv(\lambda)\times (\lv(\zeta)_{|d(\zeta)|-|d(\mu)|},\dots,\lv(\zeta)_1),()).\]
One checks easily that $\delta\leq (d(\lambda\circ\xi)+d(\alpha''))\land d(\beta'')$ and $|\delta|\leq |d(\lambda\circ\xi)+d(\alpha'')|-s$ where $s=|d(\lambda)|+|d(\zeta)|-|d(\mu)|$. Hence Lemma \ref{12} shows that $C$ reduces to $C':=\sum\limits_{(\tilde\alpha,\tilde\beta)\in S_3}\tilde\alpha\tilde\beta^*$ where
\[S_3=S(r(\lambda),s(\zeta),d(\lambda\circ\xi)+d(\alpha'')-\delta, d(\beta'')-\delta,\lv(\lambda)\times (\lv(\zeta)_{|d(\zeta)|-|d(\mu)|},\dots,\lv(\zeta)_1),()).\]
Clearly $S_1=S_3$ and hence $C'=B$.\\
\\
\underline{case 2.2} {\it Assume that $\lv(\mu\circ\lambda^{\sim,n})\sim\lv(\zeta)$.}\\
Similar to case 2.1.\\
\\
\underline{case 3} {\it Assume that $|d(\zeta)|\leq |d(\mu)|$.}\\
Since reductions are linear maps, it suffices to show that there is a composition $r$ of reductions such that $r(B)=B=r(A+C)$. Clearly $A+C=\sum\limits_{\xi\in s(\lambda)\Lambda^n}\sum\limits_{(\alpha'',\beta'')\in S(\mu\circ\xi,\zeta)}(\lambda\circ\xi\circ\alpha'')(\beta'')^*$. By Lemma \ref{11}, one can write $B=\sum\limits_{(\hat\alpha,\hat\beta)\in S_1}\hat\alpha\hat\beta^*$ and $A+C=\sum\limits_{(\tilde\alpha,\tilde\beta)\in S_2}\tilde\alpha\tilde\beta^*$ where \[S_1=S(r(\lambda),s(\zeta),d(\lambda)+d(\alpha'), d(\beta'),\lv(\lambda),(\lv(\mu)_{|d(\mu)|-|d(\zeta)|},\dots,\lv(\mu)_1))\]
and
\[S_2=S(r(\lambda),s(\zeta),d(\lambda\circ\xi)+d(\alpha''), d(\beta''),\lv(\lambda), (\lv(\mu)_{|d(\mu)|-|d(\zeta)|},\dots,\lv(\mu)_1)).\]
One checks easily that $\delta\leq (d(\lambda\circ\xi)+d(\alpha''))\land d(\beta'')$ and $|\delta|\leq |d(\lambda\circ\xi)+d(\alpha'')|-s$ where $s=|d(\lambda)|$. Hence Lemma \ref{12} shows that $A+C$ reduces to $D:=\sum\limits_{(\tilde\alpha,\tilde\beta)\in S_3}\tilde\alpha\tilde\beta^*$ where
\[S_3=S(r(\lambda),s(\zeta),d(\lambda\circ\xi)+d(\alpha'')-\delta, d(\beta'')-\delta,\lv(\lambda), (\lv(\mu)_{|d(\mu)|-|d(\zeta)|},\dots,\lv(\mu)_1)).\]
Clearly $S_1=S_3$ and hence $D=B$.\\
\\
The case that $d(\mu)=0$ can be treated analogously.\\
\\
\underline{(3),(5):}\\
Let $(\lambda,\mu)\in\A\setminus \R$ and $\xi,\zeta\in\Lambda^{\neq 0}$ such that $r(\xi)= r(\lambda)$ and $r(\mu)=r(\zeta)$. We will show how to resolve the ambiguity $\lambda\mu^*\zeta$ and leave the ambiguity $\xi^*\lambda\mu^*$ to the reader. Set $\lambda':=\lambda^{[(\lambda,\mu)]}$ and $\mu':=\mu^{[(\lambda,\mu)]}$. Clearly
\xymatrixcolsep{1pc}
\xymatrixrowsep{3pc}
\[\xymatrix{
&\lambda\mu^*\zeta\ar[rd]^-{(3)}\ar[ld]_-{(5)}&
\\
\lambda'(\mu')^*\zeta\ar[d]_-{(3)}&&\lambda\sum\limits_{(\alpha, \beta)\in S(\mu,\zeta)}\alpha\beta^*\ar[d]^-{(1)}
\\
\lambda'\sum\limits_{(\alpha',\beta')\in S(\mu',\zeta)}\alpha'(\beta')^*
\ar[d]_-{(1)}&&{\underbrace{\sum\limits_{(\alpha,\beta)\in S(\mu,\zeta)}(\lambda\circ\alpha)(\beta)^*}_{A:=}}
\\
{\underbrace{\sum\limits_{(\alpha',\beta')\in S(\mu',\zeta)}(\lambda'\circ\alpha')(\beta')^*}_{B:=}}
&&
}
\]
Denote by $d(\alpha)$ the degree of an $\alpha$ appearing in $A$, by $d(\beta)$ the degree of a $\beta$ appearing in $A$, by $d(\alpha')$ the degree of an $\alpha'$ appearing in $B$ and by $d(\beta')$ the degree of a $\beta'$ appearing in $B$. If $\lv(\mu')\overset{L. \ref{7}}{=}\lv(\mu)\not\sim\lv(\zeta)$, then $A=0=B$ by Lemma \ref{11}. Hence we can assume that $\lv(\mu)\sim\lv(\zeta)$. By Lemma \ref{11}, $A=\sum\limits_{(\hat\alpha,\hat\beta)\in S_1}\hat\alpha\hat\beta^*$ and $B=\sum\limits_{(\tilde\alpha,\tilde\beta)\in S_2}\tilde\alpha\tilde\beta^*$ where 
\begin{align*}
S_1=&S(r(\lambda),s(\zeta),d(\lambda)+d(\alpha), d(\beta),\\
&\lv(\lambda)\times (\lv(\zeta)_{|d(\zeta)|-|d(\mu)|},\dots,\lv(\zeta)_1),(\lv(\mu)_{|d(\mu)|-|d(\zeta)|},\dots,\lv(\mu)_1))
\end{align*}
and
\begin{align*}
S_2=&S(r(\lambda'),s(\zeta),d(\lambda')+d(\alpha'), d(\beta'),\\
&\lv(\lambda')\times (\lv(\zeta)_{|d(\zeta)|-|d(\mu')|},\dots,\lv(\zeta)_1),(\lv(\mu')_{|d(\mu')|-|d(\zeta)|},\dots,\lv(\mu')_1)).
\end{align*}
Set $m:=(d(\lambda)+d(\alpha))\land d(\beta)$, $o:=|d(\beta)|-((|d(\mu)|-|d(\zeta)|)\lor 0)$, $m':=(d(\lambda')+d(\alpha'))\land d(\beta')$ and $o':=|d(\beta')|-((|d(\mu')|-|d(\zeta)|)\lor 0)$.\\
\\
\underline{case 1} {\it Assume that $|m|\leq o$.}\\
Clearly 
\begin{align*}
&|m|\leq o\\
\Leftrightarrow~&|(d(\lambda)+d(\alpha))\land d(\beta)|\leq |d(\beta)|-((|d(\mu)|-|d(\zeta)|)\lor 0)\quad\quad\big| -|d(\beta)|\\
\Leftrightarrow~&|(d(\lambda)+d(\alpha)-d(\beta))\land 0|\leq -((|d(\mu)|-|d(\zeta)|)\lor 0)\\
\Leftrightarrow~&|(d(\lambda)+d(\zeta)-d(\mu))\land 0|\leq -((|d(\mu)|-|d(\zeta)|)\lor 0)\\
\overset{L. \ref{7}}{\Leftrightarrow}~&|(d(\lambda')+d(\zeta)-d(\mu'))\land 0|\leq -((|d(\mu')|-|d(\zeta)|)\lor 0)\\
\vdots&\\
\Leftrightarrow~&|m'|\leq o'
\end{align*}
Lemma \ref{12} shows that $A$ reduces to $A':=\sum\limits_{(\gamma,\delta)\in S_3}\gamma\delta^*$ and $B$ reduces to $B':=\sum\limits_{(\gamma',\delta')\in S_4}\gamma'(\delta')^*$ where
\begin{align*}
S_3=&S(r(\lambda),s(\zeta),d(\lambda)+d(\alpha)-m, d(\beta)-m,\\
&\lv(\lambda)\times (\lv(\zeta)_{|d(\zeta)|-|d(\mu)|},\dots,\lv(\zeta)_1),(\lv(\mu)_{|d(\mu)|-|d(\zeta)|},\dots,\lv(\mu)_1))
\end{align*}
and
\begin{align*}
S_4=&S(r(\lambda'),s(\zeta),d(\lambda')+d(\alpha')-m', d(\beta')-m',\\
&\lv(\lambda')\times (\lv(\zeta)_{|d(\zeta)|-|d(\mu')|},\dots,\lv(\zeta)_1),(\lv(\mu')_{|d(\mu')|-|d(\zeta)|},\dots,\lv(\mu')_1)).
\end{align*}
Clearly
\begin{align*}
&d(\lambda)+d(\alpha)-m\\
=&d(\lambda)+d(\alpha)-((d(\lambda)+d(\alpha))\land d(\beta))\\
=&d(\lambda)+d(\alpha)+(-(d(\lambda)+d(\alpha))\lor -d(\beta))\\
=&d(\lambda)+(-d(\lambda)\lor (d(\alpha)-d(\beta)))\\
=&d(\lambda)+(-d(\lambda)\lor (d(\zeta)-d(\mu)))\\
=&d(\lambda)-d(\mu)+((d(\mu)-d(\lambda))\lor d(\zeta))\\
\overset{L. \ref{7}}{=}&d(\lambda')-d(\mu')+((d(\mu')-d(\lambda'))\lor d(\zeta))\\
\vdots&\\
=&d(\lambda')+d(\alpha')-m'
\end{align*}
and analogously $d(\beta)-m=d(\beta')-m'$. Hence $S_3=S_4$ and therefore $A'=B'$.\\
\\
\underline{case 2} {\it Assume that $|m|>o$.}\\
Then $|m'|>o'$ (see the previous case). Suppose that $|d(\zeta)|\geq |d(\mu)|$. Then
\[|m|=|(d(\lambda)+d(\alpha))\land d(\beta)|\leq |d(\beta)|=o~\lightning.\]
Hence $|d(\zeta)|<|d(\mu)|$. Choose $n,n'\in \N^k$ such that $n\leq m$, $n'\leq m'$, $|n|=o$ and $|n'|=o'$. Lemma \ref{12} shows that $A$ reduces to $A':=\sum\limits_{(\gamma,\delta)\in S_3}\gamma\delta^*$ and $B$ reduces to $B':=\sum\limits_{(\gamma',\delta')\in S_4}\gamma'(\delta')^*$ where
\begin{align*}
S_3=&(r(\lambda),s(\zeta),d(\lambda)+d(\alpha)-n, d(\beta)-n,\lv(\lambda),(\lv(\mu)_{|d(\mu)|-|d(\zeta)|},\dots,\lv(\mu)_1))
\end{align*}
and
\begin{align*}
S_4=S(r(\lambda'),s(\zeta),d(\lambda')+d(\alpha')-n', d(\beta')-n',\lv(\lambda'),(\lv(\mu')_{|d(\mu')|-|d(\zeta)|},\dots,\lv(\mu')_1)).
\end{align*}
Clearly $S_3=\{(\gamma,\delta)\}$ and $S_4=\{(\gamma',\delta')\}$ where $\gamma\in r(\lambda)\Lambda^{d(\lambda)+d(\alpha)-n}$, $\lv(\gamma)=\lv(\lambda)$, $\delta\in s(\zeta)\Lambda^{d(\beta)-n}$, $\lv(\delta)=(\lv(\mu)_{|d(\mu)|-|d(\zeta)|},\dots,\lv(\mu)_1)$, $\gamma'\in r(\lambda')\Lambda^{d(\lambda')+d(\alpha')-n'}$, $\lv(\gamma')=\lv(\lambda')$, $\delta'\in s(\zeta)\Lambda^{d(\beta')-n'}$ and $\lv(\delta')=(\lv(\mu')_{|d(\mu')|-|d(\zeta)|},\dots,\lv(\mu')_1)$. Clearly $d(\gamma)\neq 0$ since $d(\lambda)\neq 0$ and $d(\gamma')\neq 0$ since $d(\lambda')\neq 0$. Further $d(\delta),d(\delta')\neq 0$ since $|d(\zeta)|<|d(\mu)|=|d(\mu')|$. Hence $(\gamma,\delta), (\gamma',\delta')\subseteq \hat\A$. By Lemma \ref{7}, $d(\lambda)\land d(\mu)\neq 0$ since $(\lambda,\mu)\in \A\setminus \R$. Hence, by Lemma \ref{7}, $\lv(\lambda)_1\neq 1$ or $\lv(\mu)_1\neq 1$. It follows that $\lv(\gamma)_1\neq 1$ or $\lv(\delta)_1\neq 1$. Analogously $\lv(\gamma')_1\neq 1$ or $\lv(\delta')_1\neq 1$. Hence $(\gamma,\delta), (\gamma',\delta')\subseteq \A$ by Lemma \ref{6}. By Lemma \ref{7}, $(\gamma,\delta)\sim(\gamma',\delta')$. Hence, in view of relation (5), $A'=\gamma\delta^*$ and $B'=\gamma'(\delta')^*$ can be reduced to the same element of $R\X$ (namely $\lambda^{[(\gamma,\delta)]}(\mu^{[(\gamma,\delta)]})^*$).\\
\\
\underline{(4),(4):}\\
Let $n_1,n_2\in\N^k\setminus\{0\}$ and $\lambda_1,\lambda_2,\mu_1,\mu_2\in\Lambda$ such that $n_1\neq n_2$, $v_1:=s(\lambda_1)=s(\mu_1)$, $v_2:=s(\lambda_2)=s(\mu_2)$, $\lambda:=\lambda_1\circ\lambda^{\sim,n_1}=\lambda_2\circ\lambda^{\sim,n_2}$ and $\mu:=\mu_1\circ\lambda^{\sim,n_1}=\mu_2\circ\lambda^{\sim,n_2}$. Clearly
\xymatrixcolsep{1pc}
\xymatrixrowsep{3pc}
\[\xymatrix{
&\lambda\mu^*\ar[rd]^-{(4)}\ar[ld]_-{(4)}&
\\
{\underbrace{\lambda_1\mu_1^*-\sum\limits_{\substack{\xi\in v_1\Lambda^{n_1},\\\xi\neq\lambda^{v_1,n_1}}}(\lambda_1\circ\xi_1)(\mu_1\circ\xi_1)^*}_{A:=}}&&{\underbrace{\lambda_2\mu_2^*-\sum\limits_{\substack{\xi\in v_2\Lambda^{n_2},\\\xi\neq\lambda^{v_2,n_2}}}(\lambda_2\circ\xi_2)(\mu_2\circ\xi_2)^*}_{B:=}}
}
\]
Set $m:=d(\lambda)\land d(\mu)$ and $o:=\max\{i\mid \lv(\lambda)_j=1=\lv(\mu)_j~\forall j\in\{1,\dots,i\}\}$. Choose an $\hat n\in \N^k\setminus\{0\}$ such that $\hat n\leq m$ and $|\hat n|=|m|\land o$. By the factorisation property there are uniquely determined $\hat\lambda, \hat \mu\in \Lambda$ such that $\lambda=\hat\lambda\circ \lambda^{\sim,\hat n}$ and $\mu=\hat\mu\circ \lambda^{\sim,\hat n}$. Set $\hat v:=s(\hat\lambda)=s(\hat\mu)$. Further choose $\hat i_1,\dots,\hat i_{|\hat n|}\in \{1,\dots,k\}$ such that $\hat n=e_{\hat i_1}+\dots+e_{\hat i_{|\hat n|}}$.\\
\\
\underline{case 1} {\it Assume that $d(\hat\lambda),d(\hat\mu)\neq 0$.}\\
Set
\[C:=\lambda^{[(\hat\lambda,\hat\mu)]}(\mu^{[(\hat\lambda,\hat\mu)]})^*-\sum\limits_{\substack{1\leq p\leq |\hat n|,\\2\leq q\leq l}}\lambda^{[(\hat\lambda\circ\hat\xi_{p,q},\hat\mu\circ\hat\xi_{p,q})]}(\mu^{[(\hat\lambda\circ\hat\xi_{p,q},\hat\mu\circ\hat\xi_{p,q})]})^*\]
where $\hat\xi_{p,q}\in \hat v\Lambda^{\sum\limits_{j=0}^{p-1}e_{\hat i_{|\hat n|-j}}}$ and $\lv(\hat\xi_{p,q})=(1,\dots ,1,q)$ for any $1\leq p\leq |\hat n|$ and $2\leq q\leq l$. Note that $(\hat\lambda,\hat\mu), (\hat\lambda\circ\xi_{p,q},\hat\mu\circ\xi_{p,q})\in \A$ for any $1\leq p\leq |\hat n|$ and $2\leq q\leq l$ by Lemma \ref{6}. We will show that $A$ can be reduced to $C$. It will follow by symmetry that $B$ also can be reduced to $C$. Clearly $|\hat n|\geq |n_1|$.\\
\\
\underline{case 1.1} {\it Assume that $|\hat n|=|n_1|$.}\\
Choose $i_1,\dots,i_{|n_1|}\in \{1,\dots,k\}$ such that $n_1=e_{i_1}+\dots+e_{i_{|n_1|}}$. Lemma \ref{13} shows that $A$ reduces to 
\[A':=\lambda_1\mu_1^*-\sum\limits_{\substack{1\leq p\leq |n_1|,\\2\leq q\leq l}}(\lambda_1\circ\xi_{p,q})(\mu_1\circ\xi_{p,q})^*\]
where $\xi_{p,q}\in v_1\Lambda^{\sum\limits_{j=0}^{p-1}e_{i_{|n_1|-j}}}$ and $\lv(\xi_{p,q})=(1,\dots ,1,q)$ for any $1\leq p\leq |n_1|$ and $2\leq q\leq l$. 
Clearly $\lambda_1\circ\lambda^{\sim,n_1}=\lambda=\hat\lambda\circ\lambda^{\sim,\hat n}$, $\mu_1\circ\lambda^{\sim,n_1}=\mu=\hat\mu\circ\lambda^{\sim,\hat n}$ and $|\hat n|=|n_1|$ imply that $\lv(\lambda_1)=\lv(\hat\lambda)$ and $\lv(\mu_1)=\lv(\hat\mu)$. It follows that $(\lambda_1,\mu_1)\in \A$ and $(\lambda_1,\mu_1)\sim(\hat\lambda,\hat\mu)$ (by Lemma \ref{7}). Further it follows that for any $1\leq p\leq |n_1|=|\hat n|$ and $2\leq q\leq l$, $(\lambda_1\circ\xi_{p,q},\mu_1\circ\xi_{p,q})\in \A$ and $(\lambda_1\circ\xi_{p,q},\mu_1\circ\xi_{p,q})\sim(\hat\lambda\circ\hat\xi_{p,q},\hat\mu\circ\hat\xi_{p,q})$ (also by Lemma \ref{7}). Hence, in view of relation (5), $A'$ reduces to $C$.\\
\\
\underline{case 1.2} {\it Assume that $|\hat n|>|n_1|$.}\\
\\
\underline{case 1.2.1} {\it Assume that $o\geq|m|$.}\\
Then clearly $\hat n=m(=d(\lambda)\land d(\mu))$ since $\hat n\leq m$ and $|\hat n|=|m|$. Hence $n_1\leq m=\hat n$. Set $\tilde n:=\hat n-n_1\geq 0$. Then $\tilde n\neq 0$ since $|\hat n|>|n_1|$. Clearly $\lambda_1=\hat\lambda\circ\lambda^{\sim,\tilde n}$ and $\mu_1=\hat\mu\circ\lambda^{\sim,\tilde n}$. Hence \[A=\lambda_1\mu_1^*-\sum\limits_{\substack{\xi\in v_1\Lambda^{n_1},\\\xi\neq\lambda^{v_1,n_1}}}(\lambda_1\circ\xi_1)(\mu_1\circ\xi_1)^*=(\hat\lambda\circ\lambda^{\sim,\tilde n})(\hat\mu\circ\lambda^{\sim,\tilde n})^*-\sum\limits_{\substack{\xi\in v_1\Lambda^{n_1},\\\xi\neq\lambda^{v_1,n_1}}}(\lambda_1\circ\xi_1)(\mu_1\circ\xi_1)^*.\]
In view of relation (4), $A$ reduces to
\[A':=\hat\lambda\hat\mu^*-\sum\limits_{\substack{\xi'\in \hat v\Lambda^{\tilde n},\\\xi'\neq\lambda^{\hat v,\tilde n}}}(\hat\lambda\circ\xi')(\hat\mu\circ\xi')^*-\sum\limits_{\substack{\xi\in v_1\Lambda^{n_1},\\\xi\neq\lambda^{v_1,n_1}}}(\lambda_1\circ\xi_1)(\mu_1\circ\xi_1)^*.\]
Choose $i_1,\dots,i_{|n_1|}\in \{1,\dots,k\}$ and $\tilde i_1,\dots,\tilde i_{|\tilde n|}\in \{1,\dots,k\}$ such that $n_1=e_{i_1}+\dots+e_{i_{|n_1|}}$ and $\tilde n=e_{\tilde i_1}+\dots+e_{\tilde i_{|\tilde n|}}$. Lemma \ref{13} shows that $A'$ reduces to 
\[A'':=\hat\lambda\hat\mu^*-\sum\limits_{\substack{1\leq p\leq |\tilde n|,\\2\leq q\leq l}}(\hat\lambda\circ\tilde\xi_{p,q})(\hat\mu\circ\tilde\xi_{p,q})^*-\sum\limits_{\substack{1\leq p\leq |n_1|,\\2\leq q\leq l}}(\lambda_1\circ\xi_{p,q})(\mu_1\circ\xi_{p,q})^*\]
where $\tilde\xi_{p,q}\in\hat v\Lambda^{\sum\limits_{j=0}^{p-1}e_{\tilde i_{|\tilde n|-j}}}$ and $\lv(\tilde\xi_{p,q})=(1,\dots ,1,q)$ for any $1\leq p\leq |\tilde n|$ and $2\leq q\leq l$ and $\xi_{p,q}\in v_1\Lambda^{\sum\limits_{j=0}^{p-1}e_{i_{|n_1|-j}}}$ and $\lv(\xi_{p,q})=(1,\dots ,1,q)$ for any $1\leq p\leq |n_1|$ and $2\leq q\leq l$. It follows from Lemma \ref{7} that for any $1\leq p\leq |\tilde n|$ and $2\leq q\leq l$, $(\hat\lambda\circ\tilde\xi_{p,q},\hat\mu\circ\tilde \xi_{p,q})\in \A$ and $(\hat\lambda\circ\tilde\xi_{p,q},\hat\mu\circ\tilde \xi_{p,q})\sim(\hat\lambda\circ\hat\xi_{p,q},\hat\mu\circ\hat\xi_{p,q})$. Further, also by Lemma \ref{7}, for any $1\leq p\leq |n_1|$ and $2\leq q\leq l$, $(\lambda_1\circ\xi_{p,q},\mu_1\circ\xi_{p,q})\in \A$ and $(\lambda_1\circ\xi_{p,q},\mu_1\circ\xi_{p,q})\sim(\hat\lambda\circ\hat\xi_{|\tilde n|+p,q},\hat\mu\circ\hat\xi_{|\tilde n|+p,q})$. Hence, in view of relation (5), $A''$ reduces to $C$.\\
\\
\underline{case 1.2.2} {\it Assume that $o<|m|$.}\\
Then $|\hat n|=o$. Hence 
\begin{align*}
&|\hat n|<|m|=|d(\lambda)\land d(\mu)|\\
\Rightarrow~&|\hat n|-|n_1|<|d(\lambda)\land d(\mu)|-|n_1|\\
\Rightarrow~&|\hat n|-|n_1|<|d(\lambda_1)\land d(\mu_1)|.
\end{align*}
Therefore one can choose an $\tilde n\in \N^k\setminus\{0\}$ such that $\tilde n\leq d(\lambda_1)\land d(\mu_1)$ and $|\tilde n|=|\hat n|-|n_1|>0$. Clearly $\lambda_1=\tilde\lambda\circ\lambda^{\sim,\tilde n}$ and $\mu_1=\tilde\mu\circ\lambda^{\sim,\tilde n}$ where $\tilde \lambda\in r(\lambda)\Lambda^{d(\lambda)-n_1-\tilde n}$, $\tilde \mu\in r(\mu)\Lambda^{d(\mu)-n_1-\tilde n}$, $\lv(\tilde \lambda)\sim\lv(\lambda)$ and $\lv(\tilde \mu)\sim\lv(\mu)$. Hence \[A=\lambda_1\mu_1^*-\sum\limits_{\substack{\xi\in v_1\Lambda^{n_1},\\\xi\neq\lambda^{v_1,n_1}}}(\lambda_1\circ\xi_1)(\mu_1\circ\xi_1)^*=(\tilde\lambda\circ\lambda^{\sim,\tilde n})(\tilde\mu\circ\lambda^{\sim,\tilde n})^*-\sum\limits_{\substack{\xi\in v_1\Lambda^{n_1},\\\xi\neq\lambda^{v_1,n_1}}}(\lambda_1\circ\xi_1)(\mu_1\circ\xi_1)^*.\]
Set $\tilde v:=s(\tilde\lambda)=s(\tilde\mu)$. In view of relation (4), $A$ reduces to
\[A':=\tilde\lambda\tilde\mu^*-\sum\limits_{\substack{\xi'\in \tilde v\Lambda^{\tilde n},\\\xi'\neq\lambda^{\tilde v,\tilde n}}}(\tilde\lambda\circ\xi')(\tilde\mu\circ\xi')^*-\sum\limits_{\substack{\xi\in v_1\Lambda^{n_1},\\\xi\neq\lambda^{v_1,n_1}}}(\lambda_1\circ\xi_1)(\mu_1\circ\xi_1)^*.\]
Choose $i_1,\dots,i_{|n_1|}\in \{1,\dots,k\}$ and $\tilde i_1,\dots,\tilde i_{|\tilde n|}\in \{1,\dots,k\}$ such that $n_1=e_{i_1}+\dots+e_{i_{|n_1|}}$ and $\tilde n=e_{\tilde i_1}+\dots+e_{\tilde i_{|\tilde n|}}$. Lemma \ref{13} shows that $A'$ reduces to 
\[A'':=\tilde\lambda\tilde\mu^*-\sum\limits_{\substack{1\leq p\leq |\tilde n|,\\2\leq q\leq l}}(\tilde\lambda\circ\tilde\xi_{p,q})(\tilde\mu\circ\tilde\xi_{p,q})^*-\sum\limits_{\substack{1\leq p\leq |n_1|,\\2\leq q\leq l}}(\lambda_1\circ\xi_{p,q})(\mu_1\circ\xi_{p,q})^*\]
where $\tilde\xi_{p,q}\in\tilde v\Lambda^{\sum\limits_{j=0}^{p-1}e_{\tilde i_{|\tilde n|-j}}}$ and $\lv(\tilde\xi_{p,q})=(1,\dots ,1,q)$ for any $1\leq p\leq |\tilde n|$ and $2\leq q\leq l$ and $\xi_{p,q}\in v_1\Lambda^{\sum\limits_{j=0}^{p-1}e_{i_{|n_1|-j}}}$ and $\lv(\xi_{p,q})=(1,\dots ,1,q)$ for any $1\leq p\leq |n_1|$ and $2\leq q\leq l$. It follows from Lemma \ref{7} that $(\tilde\lambda,\tilde\mu)\in \A$ and $(\tilde\lambda,\tilde\mu)\sim(\hat\lambda,\hat\mu)$. Further it follows from Lemma \ref{7} that for any $1\leq p\leq |\tilde n|$ and $2\leq q\leq l$, $(\tilde\lambda\circ\tilde\xi_{p,q},\tilde\mu\circ\tilde \xi_{p,q})\in \A$ and $(\tilde\lambda\circ\tilde\xi_{p,q},\tilde\mu\circ\tilde \xi_{p,q})\sim(\hat\lambda\circ\hat\xi_{p,q},\hat\mu\circ\hat\xi_{p,q})$. Further, also by Lemma \ref{7}, for any $1\leq p\leq |n_1|$ and $2\leq q\leq l$, $(\lambda_1\circ\xi_{p,q},\mu_1\circ\xi_{p,q})\in \A$ and $(\lambda_1\circ\xi_{p,q},\mu_1\circ\xi_{p,q})\sim(\hat\lambda\circ\hat\xi_{|\tilde n|+p,q},\hat\mu\circ\hat\xi_{|\tilde n|+p,q})$. Hence, in view of relation (5), $A''$ reduces to $C$.\\
\\
\underline{case 2} {\it Assume that $d(\hat\lambda)=0$ or $d(\hat\mu)=0$.}\\
This case is very similar to case 1 and hence is omitted.\\
\\
Thus all ambiguities are resolvable. It follows from \cite[Theorem 15]{hp} that $KP_R(\Lambda)=R\X/I$ is isomorphic to $R\X_{\irr}$ as an $R$-module where $R\X_{\irr}$ is the submodule of $R\X$ consisting of all irreducible elements (cf. \cite[Definition 11]{hp}). Clearly the elements $v~(v\in \Lambda^0)$, $\lambda~(\lambda\in \Lambda^{\neq 0})$, $\lambda^*~(\lambda\in \Lambda^{\neq 0})$ and $\lambda\mu^*~((\lambda,\mu)\in \R)$ form a basis for $R\X_{\irr}$.
\end{proof}


\begin{thebibliography}{99}

\bibitem{hp} R. Hazrat, R. Preusser, \emph{Applications of normal forms for weighted Leavitt path
algebras: simple rings and domains}, accepted by Algebr. Represent. Theor..

\bibitem{pchr} G. A. Pino, J. Clark, A. an Huef, I. Raeburn, \emph{Kumjian-Pask algebras of higher-rank graphs}, Trans. Amer. Math. Soc. {\bf 365} (2013), no. 7, 3613-–3641.

\end{thebibliography}
\end{document}